\documentclass[a4paper, 12pt]{article}

\usepackage[margin=1in]{geometry}
\usepackage{amsmath,amssymb,amsthm,mathtools}
\usepackage{microtype}
\usepackage{enumitem}
\usepackage[colorlinks=true,linkcolor=blue,citecolor=blue,urlcolor=blue]{hyperref}

\newtheorem{theorem}{Theorem}[section]

\newtheorem{lemma}[theorem]{Lemma}
\newtheorem{corollary}[theorem]{Corollary}

\theoremstyle{definition}
\newtheorem{definition}[theorem]{Definition}

\DeclareMathOperator{\ord}{ord}
\DeclareMathOperator{\supp}{supp}

\title{The Gao-Zhuang conjecture for the Heisenberg group over $\mathbb{F}_p$}
\author{
Yongke Qu
\qquad
Guoqing Wang\thanks{Corresponding author. E-mail: gqwang1979@aliyun.com.}}
\date{}

\begin{document}

\maketitle

\begin{abstract}
Let $G$ be a finite nonabelian group. The small Davenport constant
$\mathsf d(G)$ of $G$ is the largest integer $\ell$ such that there
exists a product-one free sequence over $G$ of length $\ell$, while
the Gao constant $E(G)$ of $G$ is the least integer $\ell$ such that
every sequence over $G$ of length at least $\ell$ contains a
product-one subsequence of length exactly $|G|$. A long-standing
conjecture of Gao and Zhuang \cite{ZG2005} asserts that
$E(G)=\mathsf d(G)+|G|$ for every finite nonabelian group $G$.

Let $p$ be an odd prime and let
$H_{p^3}=\operatorname{UT}_3(\mathbb F_p)$ be the finite Heisenberg
group over $\mathbb F_p$. Godara and Sarkar proved the Gao-Zhuang
equality for $H_{27}=\operatorname{UT}_3(\mathbb F_3)$ and asked
whether the same equality holds for $H_{p^3}$ for every odd prime
$p$. Recently, Volkmann proved that
$\mathsf d(H_{p^3})=3p-3$. In this paper, we determine the Gao
constant of $H_{p^3}$ and prove that
$E(H_{p^3})=\mathsf d(H_{p^3})+|H_{p^3}|=p^3+3p-3$. Together with the known abelian and cyclic-index cases, this
completes the verification of the Gao-Zhuang equality for all groups
of order $p^3$, for every prime $p$.
\end{abstract}

\noindent\textbf{Keywords.}
Zero-sum; Gao-Zhuang conjecture;  Gao constant; small Davenport constant;
Heisenberg group; finite $p$-group.

\medskip
\noindent\textbf{2020 Mathematics Subject Classification.}
Primary 11B75; Secondary 20D60.

\section{Introduction}

The classical theorem of Erd\H{o}s, Ginzburg and Ziv \cite{EGZ}
asserts that every sequence of $2n-1$ elements of the cyclic group
$C_n$ contains an $n$-term zero-sum subsequence. It is one of the
foundational results of zero-sum theory; see the survey of Gao and
Geroldinger \cite{GaoGeroldingersurvey}. For a finite group $G$, a
nonempty subsequence is called \emph{product-one} if its terms can be
ordered so that their product is the identity. The small Davenport
constant $\mathsf d(G)$ is the largest length of a product-one free
sequence over $G$, while the Gao constant $E(G)$ is the least integer
$\ell$ such that every sequence over $G$ of length at least $\ell$
contains a product-one subsequence of length $|G|$.

For finite abelian groups, Gao \cite{Gao1996} proved the fundamental
identity $E(G)=\mathsf d(G)+|G|$. Motivated by this result, Gao and
Zhuang \cite{ZG2005} conjectured that the same equality holds for every
finite group. In contrast with the abelian case, the nonabelian
problem involves both the selection of a subsequence and the ordering
of its terms, and the conjecture is known only for particular classes
of groups. Gao and Zhuang initiated the study for dihedral groups of
large prime index \cite{ZG2005}. Bass \cite{Bass2007} subsequently
verified the conjecture for all dihedral and dicyclic groups and for
all nonabelian groups of order $pq$, where $p$ and $q$ are primes.
Han \cite{Han2015} determined the Gao constant for
$C_p\ltimes C_{pn}$;  Han and Zhang \cite{HanZhang2019} obtained
further results for semidirect products of the form
$C_m\ltimes_{\varphi}C_{mn}$. Further progress for nonabelian
metacyclic groups was obtained by Qu and Li \cite{QuLi2023} and by
Avelar, Brochero Mart\'{\i}nez and Ribas
\cite{AvelarBrocheroRibas2023}. More recently, Oh, Ribas, Zhao and
Zhong \cite{ORZZ} completed the determination of the Gao constant,
together with the associated inverse problem, for all metacyclic
groups of the form $C_n\rtimes_s C_2$. Qu, Gao and Li \cite{QuGaoLi} verified the
Gao-Zhuang conjecture for every finite group having a cyclic
subgroup whose index is the smallest prime divisor of the group
order.

The finite Heisenberg group
$H_{p^3}=\operatorname{UT}_3(\mathbb F_p)$ is a canonical and
fundamental example among finite nonabelian $p$-groups. For every odd
prime $p$, it is the unique nonabelian group of order $p^3$ and
exponent $p$. Moreover, it is nilpotent of class $2$, with
$Z(H_{p^3})=[H_{p^3},H_{p^3}]\cong C_p$ and
$H_{p^3}/Z(H_{p^3})\cong C_p^2$; in particular, it is the extraspecial
$p$-group of order $p^3$ and exponent $p$ \cite{Gorenstein}. These
features make $H_{p^3}$ one of the most basic testing grounds for
questions on nonabelian $p$-groups, while its elementary abelian
central quotient provides a particularly natural bridge between
nonabelian product-one theory and classical zero-sum theory.

Godara and Sarkar \cite{GodaraSarkar}  verified the Gao-Zhuang conjecture
for $H_{27}$ and explicitly asked whether it holds for $H_{p^3}$ for
every odd prime $p$. Very recently, Volkmann \cite{Volkmann} proved
the uniform formula $\mathsf d(H_{p^3})=3p-3$. Thus the remaining
problem is to determine $E(H_{p^3})$. Our main result answers this
question affirmatively for the whole family.

\begin{theorem}\label{theorem:main}
Let $p$ be an odd prime and let $G=H_{p^3}$ be the Heisenberg group. Then
$$E(G)=\mathsf d(G)+|G|=p^3+3p-3.$$
\end{theorem}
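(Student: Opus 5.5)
The lower bound $E(G)\ge \mathsf d(G)+|G|$ holds for every finite group and is standard. Take a product-one free sequence $T$ of length $\mathsf d(G)=3p-3$, using Volkmann's result, and append $|G|-1$ copies of the identity $1$. A product-one subsequence of length $|G|$ must use at least one term of $T$. Removing the identities from it leaves a nonempty product-one subsequence of $T$, which is impossible. Hence $E(G)\ge p^3+3p-3$, and the whole work is the upper bound: every sequence $S$ over $G$ with $|S|=p^3+3p-3$ contains a product-one subsequence of length $p^3$.

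For the upper bound I would exploit the central extension $1\to Z\to G\to G/Z\cong C_p^2\to 1$ with $Z\cong C_p$, and build the required subsequence in $p^2$ blocks of length $p$. Let $\pi:G\to C_p^2$ be the projection. The Kemnitz--Reiher theorem gives $\eta(C_p^2)=3p-2$ and $\mathsf s(C_p^2)=4p-3$. Since $|S|\ge (p^2-1)p+3p-3+\dots$ is large, I can repeatedly extract disjoint subsequences $T_1,\dots,T_k$ of length exactly $p$ whose images under $\pi$ sum to zero. This is possible as long as at least $3p-2$ terms remain, and stopping when fewer than $3p-2$ remain gives $k\ge p^2+1$ blocks. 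For any ordering, the product of each $T_i$ then lies in $Z$, giving an element $z_i\in Z\cong C_p$. Using the Erd\H{o}s--Ginzburg--Ziv theorem on the $z_i$ alone needs $2p-1$ blocks to get $p$ of them summing to $0$, which produces only $p^2$ terms. The blocks therefore have to be grouped differently.

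The better scheme is a two-level one. First extract $2p-1$-type families of blocks, and within each group of $p$ blocks choose a $p$-subset whose central values sum to zero. Concatenating the product-one blocks gives product-one sequences of length $p^2$. Then iterate at the $Z$-level: $p$ such sequences of length $p^2$, concatenated, give length $p^3$. The counting has to be tight. We have $|S|-p^3=3p-3$ extra terms, while naive EGZ at both levels wastes about $(p-1)p^2+(p-1)p$ terms. Freedom must therefore be gained from the ordering of terms: the product of a block depends on its ordering, and permuting two non-commuting terms changes the product by a commutator. The key lemma I would aim for is this. If a block $T$ of length $p$ with $\pi$-sum zero contains two terms whose images in $C_p^2$ are linearly independent, then the set of products $\pi^{-1}(0)$ attainable by reorderings is all of $Z$. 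I would prove it by swapping adjacent terms $x,y$, which multiplies the product by a conjugate of $[x,y]\ne 1$; iterating a rotation of $y$ past $j$ copies of a fixed term realises every central value.

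Given that lemma, the argument runs as follows. Call a block \emph{flexible} if its image is not contained in a line of $C_p^2$. Any flexible block can be adjusted to product $1$, so all product-one blocks can be concatenated freely, and we need $p^2$ flexible blocks, or blocks already with product $1$. Concatenating product-one blocks gives a product-one sequence of length $p^3$. The main obstacle is the degenerate case, in which $\pi(S)$ is concentrated on few lines through the origin, or many blocks are forced to lie in a single cyclic subgroup $\langle g\rangle Z$ (an abelian subgroup $\cong C_p^2$). There I would do a case analysis on the line multiplicities. If some abelian maximal subgroup $A\cong C_p^2$ contains at least $p^3+2p-2$ terms, hmm: I would instead combine terms inside $A$ using $E(C_p^2)=p^2+2p-2$ applied repeatedly to form product-one blocks of length $p^2$. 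Terms outside $A$ would then be used to create flexibility, mixing one outside term into otherwise rigid blocks to kill the central obstruction. Verifying that the $3p-3$ slack suffices in every distribution of lines is where I expect most of the effort, using Volkmann's structural description of long product-one free sequences as a guide to the extremal configurations.
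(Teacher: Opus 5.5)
Your lower bound is correct, but the upper bound has two genuine gaps.

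First, your key lemma is false. Take $u=(1,0)$, $v=(0,1)$, $w=(2,-1)$ and a block $T$ with $\varphi(T)=u^{[p-2]}\boldsymbol{\cdot}v\boldsymbol{\cdot}w$. It is zero-sum, mixed, and has no zero terms. Let $i$ and $j$ count the copies of $u$ placed before $v$ and before $w$. An ordering with $v$ first has value $i-j$, where $0\le j-i\le p-2$. An ordering with $w$ first has value $i-j+2$, where $0\le i-j\le p-2$. Hence $\Omega(\varphi(T))=\mathbb F_p\setminus\{1\}$. Choosing the central coordinates so that $c(T)=-1$ makes $T$ flexible in your sense but never product-one. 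For $p=3$ this is forced anyway: rotating an ordering whose product is central does not change the product, so a $3$-term block realises at most $2$ central values.

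Volkmann's bound $|\Omega(B)|\ge\min(p,n-1)$ is sharp, and blocks may also contain central terms. So block-by-block adjustment of $p$-term blocks is not available. The paper never adjusts individual blocks. It concatenates $p^2$ blocks into one sequence $T$ of length $p^3$ with mixed image. Lemma~\ref{lemma:center} gives $\mathsf v_z(S)\le 3p-4$ for every $z\in Z$, so $T$ has at most $3p^2-4p$ central terms and at least $4p$ noncentral ones. Lemma~\ref{lemma:growth} then yields $\Omega(\varphi(T))=\mathbb F_p$ (Claim~1).

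A smaller error: $\eta(C_p^2)=3p-2$ only produces zero-sum subsequences of length at most $p$. Extracting blocks of length exactly $p$ needs $\mathsf s(C_p^2)=4p-3$, which gives at least $p^2$ blocks, not $p^2+1$.

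Second, the degenerate case you leave open is where essentially all the work lies. With Claim~1 in hand, a maximal decomposition into $p$-term blocks with zero-sum image puts all blocks inside one $K=\varphi^{-1}(\overline K)\cong C_p^2$ (Claim~2). The subcase $|S_K|\ge p^3+2p-2$ is immediate from Gao's prescribed-length theorem; this is your $E(C_p^2)$ remark. So the real case is $p^3\le|S_K|\le p^3+2p-3$, with at least $p$ terms outside $K$.

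Your plan to mix one outside term into an otherwise rigid block cannot work. A zero-sum sequence over $C_p^2$ never has exactly one term outside a subgroup of order $p$. Outside terms must therefore be grouped so that their sums fall into $\overline K$, and you need many distinct such sums. The paper gets them as follows.
\begin{itemize}
\item Every $p^3$-term subsequence with zero-sum image lies in $S_K$ (Claim~3).
\item It isolates a leftover $L$ with $2p-1\le|L|\le3p-3$ and $L_K=W$ of length $p-1$, whose image has no zero-sum subsequence of length $p$ or $2p$ (Claim~4).
\item A multilinear-polynomial argument shows that the $p$- and $2p$-term sums of $\varphi(L)$ take at least $|L|-2p+2$ values in $\overline K$.
\item Cauchy--Davenport together with Lemma~\ref{lemma:height} then gives a mixed zero-sum piece of length $2p$ or $3p$. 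This extends to length $p^3$ and contradicts Claim~1.
\end{itemize}
This construction can fail only if $\varphi(S_K\boldsymbol{\cdot}W^{[-1]})$ has height at least $p^3$ for every admissible $W$. That forces some $g\in\overline K$ to have multiplicity at least $p^3+p-1$ in $\varphi(S_K)$. Gao's theorem applied to the central parts inside the coset $\varphi^{-1}(g)$ then finishes.

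Nothing in your proposal replaces this argument, so it does not establish $E(G)\le p^3+3p-3$.
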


Together with Gao's theorem for finite abelian groups and the
cyclic-index result \cite[Corollary~4.5]{QuWangLi},
Theorem~\ref{theorem:main} completes the verification of the
Gao-Zhuang conjecture for every group of order $p^3$. We record this
in Corollary~\ref{corollary:orderp3}.

Our proof makes use of classical zero-sum results on subsequences of prescribed lengths, together with some elegant ideas employed in Volkmann's argument. In particular, we use the ordering-value method for mixed zero-sum subsequences in $H_{p^3}/Z(H_{p^3})\cong C_p^2$. Combining these tools with a maximal decomposition into $p$-term zero-sum subsequences in this quotient, we are able to find a product-one subsequence of the desired length.

The remainder of the paper is organized as follows. Section~2 introduces the notation and terminology and recalls the basic structure of the Heisenberg group. Section~3 collects the necessary results and proves the auxiliary lemmas needed for the main argument. Section~4 is devoted to the proof of Theorem~\ref{theorem:main}.

\section{Notation and terminology}

Throughout this paper, all groups are finite. Unless additive notation
is explicitly used, the identity element of a group $G$ is denoted by
$1$, and the multiplication in $G$ is denoted by $\ast$. Thus,
$g\ast h$ denotes the product of $g,h\in G$. For a subset
$A\subseteq G$, the subgroup generated by $A$ is denoted by
$\langle A\rangle$. We write $C_n$ for a cyclic group of order $n$. For integers
$a\le b$, we put $[a, b]=\{a, a+1,\ldots, b\}$.

We use the standard language of sequences over groups;  see \cite{GaoGeroldingersurvey}
 and \cite[Chapter 5]{GHK}. For the
nonabelian product-one setting and the associated Davenport constants,
see \cite{GeroldingerGrynkiewicz2013} for example. A sequence over
$G$ is an element of the free abelian monoid $\mathcal F(G)$ and may be displayed either as
$$\mathop{\bullet}\limits_{g\in G} g^{[\mathsf v_g(S)]}
\qquad\text{or as}\qquad
S=g_1\boldsymbol{\cdot}\ldots\boldsymbol{\cdot}g_\ell,$$
where $\mathsf v_g(S)\in\mathbb N_0$ is the multiplicity of $g$ in $S$
and only finitely many multiplicities are nonzero. Here $\boldsymbol{\cdot}$ denotes multiplication in
$\mathcal F(G)$, that is, the concatenation of sequences, whereas
$\ast$ denotes multiplication in the group $G$. For $g\in G$ and $k\in\mathbb N_0$, the notation $g^{[k]}$ denotes
the sequence consisting of $k$ copies of $g$, whereas $g^k$ denotes
the $k$th power of $g$ in the group $G$.
The {\sl length}, {\sl support} and  {\sl height} of a sequence $S$ are
$|S|=\sum_{g\in G}\mathsf v_g(S)$, $\supp(S)=\{g\in G:\mathsf v_g(S)>0\}$ and $\mathsf h(S)=\max_{g}\mathsf v_g(S)$, respectively. The empty sequence has length
zero. For $S,T\in\mathcal F(G)$, we call $T$ a subsequence of $S$ and  write $T\mid S$ if
$\mathsf v_g(T)\leq\mathsf v_g(S)$ for every $g\in G$.  In this case,
$S\boldsymbol{\cdot}T^{[-1]}$ denotes the sequence remaining after the
terms of $T$ have been deleted from $S$. Subsequences $T_1,\ldots,T_k$ of
$S$ are called mutually disjoint if
$T_1\boldsymbol{\cdot}\ldots\boldsymbol{\cdot}T_k\mid S.$
For $X\subseteq G$, we denote by $S_X$ the subsequence of $S$
consisting of all terms from $X$.

A homomorphism $\varphi:G\to K$ acts termwise on sequences.  Namely, if
$S=g_1\boldsymbol{\cdot}\ldots\boldsymbol{\cdot}g_\ell$, then
$\varphi(S)=\varphi(g_1)\boldsymbol{\cdot}\ldots
\boldsymbol{\cdot}\varphi(g_\ell)\in\mathcal F(K).$
We use the same symbol $\varphi$ for the resulting monoid homomorphism
$\mathcal F(G)\to\mathcal F(K)$.

Let $S=g_1\boldsymbol{\cdot}\ldots\boldsymbol{\cdot}g_\ell$.  Because
$G$ need not be abelian, the product of all terms of $S$ may depend on
their order.  We therefore put
$$\pi(S)=
\bigl\{
g_{\tau(1)}\ast \cdots\ast g_{\tau(\ell)}:
\tau\ \text{is a permutation of }[1,\ell]
\bigr\}.$$
For $k\in[1,\ell]$, define
$$\Pi_k(S)=
\bigcup_{\substack{T\mid S\\ |T|=k}}\pi(T),
\qquad
\Pi(S)=\bigcup_{k=1}^{\ell}\Pi_k(S).$$
We call $S$ a \emph{product-one sequence} if
$1\in\pi(S)$, and \emph{product-one free} if $1\notin\Pi(S)$.

\begin{samepage}
\begin{definition}
Let $G$ be a finite group. The exponent of $G$ is
$\exp(G)=\operatorname{lcm}\{\ord(g):g\in G\}.$ The small Davenport constant $\mathsf d(G)$ and the Gao constant $E(G)$ of $G$ are defined as follows.
\begin{itemize}
\item[$\bullet$] $\mathsf d(G)$ is the largest integer $\ell$ such that
there exists a product-one free sequence over $G$ of length $\ell$;

\item[$\bullet$] $E(G)$ is the least integer $\ell$ such that every
sequence over $G$ of length at least $\ell$ contains a product-one
subsequence of length exactly $|G|$.

\end{itemize}
\end{definition}
\end{samepage}

We now define the Heisenberg group. Let $p$ be an odd prime, and let $\mathbb F_p$ denote the finite field with $p$ elements.
We realize the Heisenberg group as
$$H_{p^3}=\operatorname{UT}_3(\mathbb F_p)
=
\left\{
M(a,b,c)=
\begin{pmatrix}
1&a&c\\
0&1&b\\
0&0&1
\end{pmatrix}
:a,b,c\in\mathbb F_p
\right\}$$
with multiplication
$M(a,b,c)\ast M(a',b',c') = M(a+a',b+b',c+c'+ab')$.
Its center is
\begin{equation}\label{equation:centerisCp}
Z=Z(H_{p^3})= \{M(0,0,c):c\in\mathbb F_p\}\cong C_p.
\end{equation}
Let
\begin{equation}\label{equation specializehomo}
\varphi:H_{p^3}\longrightarrow H_{p^3}/Z\cong C_p^2
\end{equation}
be the canonical epimorphism. Then
$\varphi(M(a,b,c))=(a,b)$.

By the Correspondence Theorem applied to $\varphi$, every subgroup
$K$ of $H_{p^3}$ containing $Z$ and having order $p^2$ is the
preimage under $\varphi$ of a subgroup of order $p$ in
$H_{p^3}/Z\cong C_p^2$. Since $H_{p^3}$ has exponent $p$, it follows
that $K\cong C_p^2$.

Throughout the remainder of the paper,
$\varphi$ will always denote the canonical epimorphism given in
\eqref{equation specializehomo}.

Recall that a sequence in $\mathcal F(G)$ is unordered as defined above.  When the order
of its terms is relevant, we use parentheses to indicate a fixed
ordering. More precisely, if
$B=u_1\boldsymbol{\cdot}\ldots\boldsymbol{\cdot}u_n\in\mathcal F(G)$,
then an expression
$I=(u_{\tau(1)},\ldots,u_{\tau(n)})$, where
$\tau\in\mathfrak S_n$, will be called an \emph{ordering} of $B$.
Thus $I$ records an ordered list of the terms of $B$, while $B$
denotes the corresponding unordered sequence.

Let $V=\mathbb F_p^2$, and for $u\in V$ write
$u=(a(u),b(u))$. Let $B$ be a sequence over $V$. For an ordering
$I=(u_1,\ldots,u_n)$ of $B$, define its \emph{ordering value} by
$$q(I)=\sum_{1\leq i<j\leq n}a(u_i)b(u_j).$$
The \emph{ordering-value set} of $B$ is
$$\Omega(B)=\{q(I):I\text{ is an ordering of }B\}.$$
We call a sequence over $V$ \emph{mixed} if its support
is not contained in a subgroup of order $p$ of $V$.

In what follows, we use additive notation for the quotient
$H_{p^3}/Z(H_{p^3})\cong C_p^2$ and, when convenient, for abelian
subgroups of $H_{p^3}$. For a sequence $S$ over an abelian group
written additively, $\sigma(S)$ denotes the sum of its terms, and
for each positive integer $k$, we put
$$\Sigma_k(S)=\{\sigma(T):T\mid S,\ |T|=k\}.$$
In particular, $\Sigma_k(S)=\emptyset$ if $k>|S|$.
This is only a change of notation for the group operation;
product-one subsequences correspond to zero-sum subsequences.

\section{Some necessary lemmas}

We begin with the following lemmas.

\begin{lemma}{\cite[Theorem 3.2]{Volkmann}}\label{lemma:growth}
Let $p$ be an odd prime, and let $B$ be a mixed zero-sum sequence of
$n\ge3$ nonzero terms from $C_p^2$. Then
$|\Omega(B)|\ge \min(p,n-1)$.
\end{lemma}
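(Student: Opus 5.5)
The plan is to study how $q$ changes under adjacent transpositions and then argue by induction on $n$. Fix an ordering $I=(u_1,\ldots,u_n)$ of $B$ and swap the neighbours $u_i,u_{i+1}$. Only the cross term between these two elements changes, so
$$q(I')-q(I)=a(u_{i+1})b(u_i)-a(u_i)b(u_{i+1})=-\det(u_i,u_{i+1}).$$
Thus $\Omega(B)$ is a set of values connected by steps equal to determinants of pairs of terms. Two terms commute (give step $0$) exactly when they are parallel. Since $B$ is mixed, non-parallel pairs exist.

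There is a second invariance that comes from $\sigma(B)=0$. A cyclic shift moving $u_1$ to the end changes $q$ by
$$a(u_1)\sum_{j>1}b(u_j)-b(u_1)\sum_{j>1}a(u_j).$$
Using $\sum_{j>1}u_j=-u_1$, this equals $-a(u_1)b(u_1)+b(u_1)a(u_1)=0$. So $q$ is invariant under cyclic rotation, and orderings may be treated as cyclic arrangements.

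The induction runs as follows.

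\textbf{Base case $n=3$.} Here $u_3=-u_1-u_2$ with $u_1,u_2$ independent. Up to rotation there are two cyclic orders. They differ by swapping $u_1,u_2$, which changes $q$ by $\det(u_1,u_2)\neq0$. Hence $|\Omega(B)|\ge 2=\min(p,n-1)$.

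\textbf{Inductive step.} For $n\ge4$, the aim is to find two terms $u,v$ with $u+v\neq0$ such that $B'=B\boldsymbol{\cdot}(u\boldsymbol{\cdot}v)^{[-1]}\boldsymbol{\cdot}(u+v)$ is still mixed with nonzero terms. Orderings of $B'$ lift to orderings of $B$ with $u,v$ adjacent, and the lift adds the constant $a(u)b(v)$. So $\Omega(B)$ contains a translate of $\Omega(B')$, giving at least $\min(p,n-2)$ values. One extra value then comes from moving $u$ (or $v$) away from its partner. More concretely, take an ordering $I$ realizing an extreme element of the arithmetic structure, and transpose $u$ past a neighbour $w$ with $\det(u,w)\neq0$.

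To make this extra value rigorous, I would rather use a Cauchy--Davenport-type sumset argument. Write $\Omega(B)\supseteq\Omega(B')+\{0,\det(\cdot)\}$-style sets, i.e. $\Omega(B)\supseteq X\cup(X+d)$ with $d\neq0$ and $X$ a translate of $\Omega(B')$. If $|X|<p$, then $X\cup(X+d)\ne X$, since a proper nonempty subset of $\mathbb F_p$ is not invariant under a nonzero translation. This gives $|\Omega(B)|\ge\min(p,|X|+1)$.

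The key requirement is that the set $X$ realized with $u$ adjacent to $v$ in a fixed relative order must also, shifted by $d$, be realized in $\Omega(B)$. One way is to insert $u$ on the other side of a neighbour $w$. This only works uniformly if, for every ordering of $B'$, we can choose the neighbour with a fixed determinant. I would try to arrange this by instead fixing $u$ and comparing the orderings $(\ldots,u,v,\ldots)$ and $(\ldots,v,u,\ldots)$. These differ by exactly $\det(u,v)$, uniformly across all orderings of $B'$. So I choose $u,v$ non-parallel, which gives $d=\det(u,v)\neq0$ and $u+v\neq0$.

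\textbf{Main obstacle.} The hard step is choosing a non-parallel pair $u,v$ so that the merged sequence $B'$ stays mixed. This fails only if all the other terms are parallel to $u+v$. In that case I would pick a different non-parallel pair. Since $n\ge4$, some pair among the terms should work, for example by merging two terms that lie on a common line, when such a pair exists and is not opposite, to keep the structure mixed. The degenerate configurations, such as only two directions each appearing once, need a separate small check.
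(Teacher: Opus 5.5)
The paper gives no proof of this lemma; it quotes it from Volkmann, so your argument has to stand on its own. Most of it does. The lift identity $q(\ldots,u,v,\ldots)=q(\ldots,u+v,\ldots)+a(u)b(v)$ is correct. Hence $\Omega(B)\supseteq X\cup(X+d)$ with $X=\Omega(B')+a(u)b(v)$ and $d=a(v)b(u)-a(u)b(v)$. For independent $u,v$ the translation argument gives $|\Omega(B)|\ge\min(p,|X|+1)\ge\min(p,n-1)$. The base case $n=3$ is also fine. The cyclic-rotation invariance and the ``extreme element'' idea are not needed; the uniform $u,v$ versus $v,u$ comparison already does all the work.

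The gap is exactly the step you flag as the main obstacle. You never prove that for $n\ge4$ some \emph{independent} pair can be merged so that $B'$ stays mixed. Your suggested remedy, merging two terms on a common line, contradicts your own extra-value step: parallel $u,v$ give $d=0$, and opposite ones give $u+v=0$. The ``two directions each appearing once'' configuration also cannot occur when $n\ge4$.

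The missing step is short. Take independent terms $u,v$, and suppose merging them fails. Then, as you observed, all remaining terms $w_1,\ldots,w_{n-2}$ lie on the line $\langle u+v\rangle$, and this line contains neither $u$ nor $v$. Since $n\ge4$, both $w_1$ and $w_2$ exist. Merge $u$ with $w_1$ instead. These two terms are independent, so $d\neq0$ and $u+w_1\neq0$. The merged sequence still contains $v$ and $w_2$, which are independent, so it is mixed. With this inserted, your induction is a complete proof.
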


\begin{lemma}\cite[Lemma~6.1]{Volkmann}\label{lemma:criterion}
Let
$T=g_1\boldsymbol{\cdot}\ldots\boldsymbol{\cdot}g_m$
be a sequence over $H_{p^3}$, where
$g_i=M(a_i,b_i,c_i)$ for every $i\in[1,m]$. If
$I=(g_{\tau(1)},\ldots,g_{\tau(m)})$
is an ordering of $T$, then
$$\prod_{i=1}^m g_{\tau(i)}
=
M\left(
\sum_{i=1}^m a_i,\,
\sum_{i=1}^m b_i,\,
\sum_{i=1}^m c_i+
\sum_{1\leq i<j\leq m}
a_{\tau(i)}b_{\tau(j)}
\right).$$
Consequently, suppose that $\varphi(T)$ is a zero-sum sequence over
$H_{p^3}/Z$, and put
$c(T)=\sum_{i=1}^m c_i\in\mathbb F_p$. Then
$\pi(T)\subseteq Z$ and
$\pi(T)=
\left\{
M(0,0,c(T)+\lambda):
\lambda\in\Omega(\varphi(T))
\right\}.$ In particular, if $\Omega(\varphi(T))=\mathbb{F}_p$ then $T$ is product-one.
\end{lemma}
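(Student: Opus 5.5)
The statement is the explicit product formula for an ordering of $T$, together with its consequences. The proof is by direct computation from the multiplication rule $M(a,b,c)\ast M(a',b',c')=M(a+a',b+b',c+c'+ab')$.

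\textbf{Step 1: the product formula.} I will prove by induction on $m$ that for any elements $h_1,\ldots,h_m$ with $h_i=M(\alpha_i,\beta_i,\gamma_i)$,
$$h_1\ast\cdots\ast h_m=M\Bigl(\sum_i\alpha_i,\ \sum_i\beta_i,\ \sum_i\gamma_i+\sum_{i<j}\alpha_i\beta_j\Bigr).$$
The case $m=1$ is trivial. For the inductive step, multiply the formula for $h_1\ast\cdots\ast h_{m-1}$ on the right by $h_m$. The multiplication rule adds the cross term $\bigl(\sum_{i<m}\alpha_i\bigr)\beta_m=\sum_{i<m}\alpha_i\beta_m$. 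These are exactly the missing pairs $(i,m)$ in the double sum. Applying this to $h_i=g_{\tau(i)}$ gives the displayed formula. The first two coordinates and $\sum c_i$ do not depend on $\tau$, so they may be written unpermuted.

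\textbf{Step 2: the consequences.} Suppose $\varphi(T)$ is zero-sum, that is, $\sum a_i=\sum b_i=0$. Then every ordered product has the form $M(0,0,\ast)$, so $\pi(T)\subseteq Z$. Next, the orderings $I$ of $T$ map under $\varphi$ onto the orderings of $\varphi(T)$. The correction term $\sum_{i<j}a_{\tau(i)}b_{\tau(j)}$ is precisely $q(\varphi(I))$ as defined for orderings over $V$. As $\tau$ ranges over all permutations, $q(\varphi(I))$ therefore ranges over all of $\Omega(\varphi(T))$. This gives
$$\pi(T)=\{M(0,0,c(T)+\lambda):\lambda\in\Omega(\varphi(T))\}.$$
Finally, if $\Omega(\varphi(T))=\mathbb F_p$, choose $\lambda=-c(T)$. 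Then $1=M(0,0,0)\in\pi(T)$, so $T$ is product-one.

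\textbf{Main obstacle.} The only real point is bookkeeping in the surjectivity claim. Different terms of $T$ may share the same image under $\varphi$, so I must check that every ordering of the sequence $\varphi(T)$ comes from some ordering of $T$. This holds because orderings are just permutations of indices, and $\varphi$ acts termwise. Everything else is routine.
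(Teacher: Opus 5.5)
Your proof is correct and complete. The induction on $m$ via $M(a,b,c)\ast M(a',b',c')=M(a+a',b+b',c+c'+ab')$ produces exactly the missing cross terms $\alpha_i\beta_m$ for $i<m$. Using the same permutation $\tau$ for $T$ and for $\varphi(T)$, with $\varphi(M(a,b,c))=(a,b)$, then identifies the correction term with $q(\varphi(I))$ and shows that every ordering of $\varphi(T)$ arises.

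The paper gives no proof of this statement; it simply quotes Volkmann's Lemma~6.1. Your direct computation is the standard argument behind that citation, so there is no alternative route in the paper to compare against.
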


The following prescribed-length theorem is due to W. Gao.

\begin{lemma}\cite[Theorem 3.2]{GaoRestrictedII}\label{lemma:prescribed}
Let $A$ be a finite abelian group of exponent $n$, and let $k$ be a
positive integer with $kn\ge |A|$. Every sequence over $A$ of length
at least
$kn+\mathsf d(A)$
contains a zero-sum subsequence of length $kn$.
\end{lemma}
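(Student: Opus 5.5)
Since the statement is a known result of Gao, quoted from \cite{GaoRestrictedII}, the plan is to reproduce its standard structure rather than to invent a new argument. The organising idea is to reduce the problem to a counting question about sums of subsequences of prescribed length. The hypothesis $kn\ge|A|$ is what makes this reduction possible.

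\textbf{Step 1 (translation).} Let $S$ have length $kn+\mathsf d(A)$; discarding terms, we may assume this is the exact length. Since $n$ divides $kn$, replacing every term $g$ by $g-x$ changes the sum of any $kn$-term subsequence by $-knx=0$. So we may translate $S$ so that $0$ is a term of maximal multiplicity $h=\mathsf h(S)$. Write $S=0^{[h]}\boldsymbol{\cdot}S'$. It then suffices to find a zero-sum subsequence $T\mid S'$ with
$$kn-h\le |T|\le kn,$$
and then pad $T$ with zeros up to length $kn$.

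\textbf{Step 2 (greedy extraction).} Repeatedly apply the definition of $\mathsf d(A)$ to the remaining part of $S'$. Every sequence of length $\mathsf d(A)+1$ contains a nonempty zero-sum subsequence. Restricting to a window of suitable length, we choose it of length at most $\max(\mathsf d(A)+1,n)$, using that $\eta$-type bounds for $A$ give zero-sum subsequences of length at most $n$ once enough terms are present. Concatenating the extracted disjoint zero-sum blocks $T_1,T_2,\ldots$, the partial lengths $|T_1|+\cdots+|T_j|$ increase in steps of at most $n$. They stop only when fewer than $\mathsf d(A)+1$ terms of $S'$ remain. Hence, if $|S'|\ge kn-h+\mathsf d(A)$, some partial union has length in $[kn-h,kn]$ as soon as $h\ge n-1$, and we are done.

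\textbf{Step 3 (the main obstacle: small $h$).} The hard case is $h\le n-2$, when padding with zeros cannot absorb the gap left by block lengths. Here we will exploit $kn\ge|A|$. Because every element has multiplicity at most $h<n$, $S$ has at least $(kn+\mathsf d(A))/h$ distinct values. We then want to show that $\Sigma_{kn}(S)$ contains $0$ by a sumset argument. Split $S$ into $kn$ disjoint ``columns'', that is, pairwise different sets of values, and apply Kneser's theorem to the sumset of these sets, following the Moser--Scherk/Kneser style argument used in Gao's proof. The sizes of the sets add up to at least $kn+\mathsf d(A)\ge|A|+\mathsf d(A)$. Kneser's theorem then forces either the whole group, giving a zero-sum of length exactly $kn$, or a nontrivial stabiliser $K$. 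In the latter case we pass to $A/K$ and use induction on $|A|$ together with $\mathsf d(A)\ge\mathsf d(A/K)+\mathsf d(K)$ to close the argument.

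I expect the delicate point to be the bookkeeping in this stabiliser case. The inequality $kn\ge|A|$ must survive the passage to $A/K$, and the deficit counted by Kneser's theorem must be bounded by $\mathsf d(K)$. This is where I would spend the most care, first checking the elementary $p$-group case $A=C_p^2$, which is all that is needed later in the paper.
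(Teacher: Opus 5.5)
The paper does not reprove this lemma; it quotes it from Gao. So the benchmark is Gao's standard argument. Your Step~1 is correct and is how that argument begins, but Steps~2 and~3 both fail as written.

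In Step~2 the block lengths are not bounded by $n$. Minimal zero-sum subsequences can have length up to $\mathsf d(A)+1$, which exceeds $n$ for every noncyclic $A$. An $\eta$-type bound gives blocks of length at most $n$ only while at least $\eta(A)$ terms remain, and $\eta(A)$ is in general larger than $\mathsf d(A)+1$. For $C_p^2$ these are $3p-2$ and $2p-1$; for example, $e_1^{[p-1]}\boldsymbol{\cdot}e_2^{[p-1]}\boldsymbol{\cdot}(e_1+e_2)^{[p-1]}$ has no nonempty zero-sum subsequence of length at most $p$. So the short-block phase may stop as much as $\eta(A)-\mathsf d(A)-1$ below $kn-h$. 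The next block can then jump over the window $[kn-h,kn]$ of size $h+1$, so even your case $h\ge n-1$ is not settled.

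In Step~3 the count is off. For $m$ summands, Kneser's theorem reads $|X_1+\cdots+X_m|\ge\sum_i|X_i+H|-(m-1)|H|$. With $kn$ columns of total size $kn+\mathsf d(A)$ and trivial stabiliser, this only gives $|X_1+\cdots+X_{kn}|\ge\mathsf d(A)+1$. That is smaller than $|A|$ for every noncyclic $A$; for instance $2p-1<p^2$ for $C_p^2$, which is the case the paper actually needs. So there is no dichotomy ``whole group or nontrivial stabiliser''. The large number of summands forced by $kn\ge|A|$ works against you rather than for you.

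The missing idea is to control the step size by $h$ instead of $n$, and to move \emph{down} from a long zero-sum subsequence rather than up.
After Step~1, assume $h<kn$ (otherwise $0^{[kn]}$ works). Let $U\mid S'$ be a zero-sum subsequence of maximal length. Then $S'\boldsymbol{\cdot}U^{[-1]}$ is zero-sum free, so $|U|\ge|S'|-\mathsf d(A)=kn-h$, and if $|U|\le kn$ you pad with zeros.
Otherwise $|U|>kn\ge|A|$, and the key lemma is: \emph{a sequence $U$ over $A$ with $|U|\ge|A|$ and $0\notin\supp(U)$ has a nonempty zero-sum subsequence of length at most $\mathsf h(U)$}.
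To prove it, put $r=\mathsf h(U)$ and split $U$ into $r$ sets $X_1,\ldots,X_r$ of distinct elements. If $0$ had only the trivial representation in $(X_1\cup\{0\})+\cdots+(X_r\cup\{0\})$, iterating Scherk's theorem would give a sumset of size at least $\sum_i(|X_i|+1)-r+1=|U|+1>|A|$, which is impossible. Adjoining $0$ to each set is exactly what absorbs the $-(m-1)$ deficit that sinks your Step~3.
Deleting such a subsequence keeps $U$ zero-sum and shortens it by at most $\mathsf h(U)\le h$. Iterate while $|U|>kn$; this is precisely where $kn\ge|A|$ enters. The first length that drops to at most $kn$ is at least $kn-h+1$, so padding with at most $h-1$ zeros finishes the proof. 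No case split on $h$ and no induction on $|A|$ is needed.
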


In particular, since
$\mathsf d(C_p)=p-1$ and $\mathsf d(C_p^2)=2p-2$, every sequence
over $C_p$ of length at least $p^3+p-1$ contains a zero-sum
subsequence of length $p^3$, while every sequence over $C_p^2$ of
length at least $p^3+2p-2$ contains a zero-sum subsequence of length
$p^3$.

\begin{lemma}[{\cite[Lemma 4]{ZG2005}}]\label{lemma:lowerbound}
For every finite group $G$,
$E(G)\ge \mathsf d(G)+|G|$.
\end{lemma}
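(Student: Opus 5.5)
The plan is to exhibit an explicit sequence of length $\mathsf d(G)+|G|-1$ over $G$ that has no product-one subsequence of length exactly $|G|$. By the definition of $E(G)$ as a least integer, this forces $E(G)\ge \mathsf d(G)+|G|$.

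\emph{Construction.} By the definition of $\mathsf d(G)$, fix a product-one free sequence $T$ over $G$ with $|T|=\mathsf d(G)$. Put
$$S=T\boldsymbol{\cdot}1^{[|G|-1]},$$
which has length $\mathsf d(G)+|G|-1$. Here $\mathsf d(G)$ is finite, and $T$ may be empty, for instance when $G$ is trivial; the argument below covers that case too.

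\emph{Key step.} Suppose $W\mid S$ with $|W|=|G|$ and $1\in\pi(W)$. Since $S$ contains only $|G|-1$ copies of the identity from the appended block, $W$ must contain at least one term that does not come from $1^{[|G|-1]}$. Write $W=1^{[k]}\boldsymbol{\cdot}T'$, where $1^{[k]}$ collects the terms taken from the appended block and $T'\mid T$. Then $k\le |G|-1$, so $T'$ is nonempty.

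Fix an ordering of $W$ whose product is $1$. Deleting the $k$ identity factors does not change the product, so we obtain an ordering of $T'$ with product $1$. Thus $1\in\pi(T')\subseteq\Pi(T)$, which contradicts the assumption that $T$ is product-one free. (If $T$ itself contains the identity this cannot happen, since $T$ is product-one free, but the argument does not depend on that.)

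Hence $S$ has no product-one subsequence of length $|G|$, so $E(G)>|S|=\mathsf d(G)+|G|-1$. There is no real obstacle here. The only point needing care is that removing identity factors preserves the product regardless of their positions in a nonabelian ordering, which is immediate.
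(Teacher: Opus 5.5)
Your proof is correct, and it is the standard argument for this lemma. The paper gives no proof of its own and simply cites Zhuang and Gao, whose argument is the same as yours: append $|G|-1$ copies of the identity to a product-one free sequence of length $\mathsf d(G)$, and observe that deleting the identity factors from any product-one subsequence of length $|G|$ leaves a nonempty product-one subsequence of the original sequence, a contradiction.
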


\begin{lemma}{\cite[Theorem 6.2]{Volkmann}}\label{lemma:dG}
For every odd prime $p$,
$\mathsf d(H_{p^3})=3p-3$.
\end{lemma}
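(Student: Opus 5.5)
We prove the two inequalities $\mathsf d(H_{p^3})\ge 3p-3$ and $\mathsf d(H_{p^3})\le 3p-3$ separately. The tools are Lemma~\ref{lemma:criterion} to turn ordering values into central products, and Lemma~\ref{lemma:growth} to guarantee many such values.

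\emph{Lower bound.} Consider
$$S=M(1,0,0)^{[p-1]}\boldsymbol{\cdot}M(0,1,0)^{[p-1]}\boldsymbol{\cdot}M(0,0,1)^{[p-1]}.$$
Let $T\mid S$ be product-one. Then $\varphi(T)$ is zero-sum in $C_p^2$.
\begin{itemize}
\item Only $M(1,0,0)$ has nonzero first coordinate, so the number of such terms in $T$ is divisible by $p$, hence equals $0$.
\item By the same argument with the second coordinate, $T$ contains no copy of $M(0,1,0)$.
\item So $T$ consists of $j\in[1,p-1]$ copies of $M(0,0,1)$, whose product is $M(0,0,j)\ne 1$.
\end{itemize}
Thus $S$ is product-one free of length $3p-3$.

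\emph{Upper bound.} Let $S$ have length $3p-2$, and write $S=S_Z\boldsymbol{\cdot}S'$ with $k=|S_Z|$.
\begin{itemize}
\item If $k\ge p$, then $\mathsf d(C_p)=p-1$ gives a product-one subsequence inside $S_Z$. So assume $k\le p-1$.
\item If $\supp(\varphi(S'))$ lies in a subgroup $L$ of order $p$, then $S$ lies in $\varphi^{-1}(L)\cong C_p^2$. Since $3p-2>2p-2=\mathsf d(C_p^2)$, we are done. So assume $\varphi(S')$ is mixed.
\end{itemize}
The key mechanism is as follows. Suppose $T\mid S'$ is such that $\varphi(T)$ is a mixed zero-sum sequence of $n\ge3$ nonzero terms. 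By Lemma~\ref{lemma:criterion} and Lemma~\ref{lemma:growth}, $\pi(T)$ is a set of at least $\min(p,n-1)$ central elements. The set $\Sigma^0$ of products of subsequences of $S_Z$, including the empty one, has size at least $k+1$; this follows from the standard Cauchy--Davenport argument in $C_p$, using that $S_Z$ is product-one free. Hence by Cauchy--Davenport, $\pi(T)\ast\Sigma^0=Z\ni 1$ as soon as $n+k\ge p+1$, which gives a product-one subsequence of $S$. The task is therefore to find such a $T$ with $|T|\ge p+1-k$.

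\emph{Main obstacle.} This is producing the long mixed zero-sum $T$. We have $|S'|=3p-2-k\ge 2p-1$, and I would choose $T$ such that $\varphi(T)$ is a zero-sum subsequence of $\varphi(S')$ of maximal length.
\begin{itemize}
\item If $|T|$ were at most $p-k$, the complement would have length at least $2p-2$. I would then use $\mathsf d(C_p^2)=2p-2$, together with the structure of long zero-sum-free sequences in $C_p^2$ (they are concentrated on few cosets), to extract a further zero-sum piece. Adjoining it to $T$ contradicts maximality.
\item If the maximal $T$ is not mixed, all its terms lie in one line $L$. In that case I would either swap in a term of $S'$ outside $L$, preserving zero-sum by exchanging a zero-sum block, or work directly in the abelian group $\varphi^{-1}(L)$. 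There the $\ge 2p-1$ terms of $S'_{\varphi^{-1}(L)}\boldsymbol{\cdot}S_Z$ already force a product-one subsequence.
\end{itemize}
Balancing these cases, especially when $k$ is small and $\varphi(S')$ has a long part on one line with few mixed terms, is where I expect the real difficulty. If the direct argument fails there, I would fall back on splitting $\varphi(S')$ into several disjoint zero-sum blocks and combining their $\Omega$-sets additively.
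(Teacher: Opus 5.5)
Your lower-bound construction is correct. The preliminary reductions for the upper bound are also correct, and so is the Cauchy--Davenport mechanism: if $S_Z$ is product-one free with $k\le p-1$ terms and some $T\mid S'$ has $\varphi(T)$ a mixed zero-sum sequence with $|T|+k\ge p+1$, then $\pi(T)\ast\Sigma^0=Z$. The paper does not prove this statement; it imports it from Volkmann's Theorem~6.2, so everything rests on your own upper-bound argument.

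The gap is that the existence of such a $T$ is never established, and your sketch fails exactly at the critical length. Maximize over \emph{mixed} zero-sum subsequences; maximizing over all zero-sum subsequences, as you propose, may return a non-mixed one. Adjoining a disjoint zero-sum block keeps mixedness, so the complement of $T$ in $\varphi(S')$ is zero-sum free and $|T|\ge |S'|-(2p-2)=p-k$. This is one short of what you need, and that one unit is where the difficulty lies. When $|T|=p-k$, the complement is a zero-sum free sequence of the maximal length $2p-2$ (such sequences exist, e.g.\ $e_1^{[p-1]}\boldsymbol{\cdot}e_2^{[p-1]}$ for a basis $(e_1,e_2)$). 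So there is no further zero-sum piece to extract, and your count only gives $|\pi(T)\ast\Sigma^0|\ge p-1$, possibly missing $1$. Invoking the structure of maximal zero-sum free sequences in $C_p^2$ describes the complement but does not by itself produce a longer mixed zero-sum. You would have to recombine terms of $T$ with the complement and prove that some recombination either has length at least $p+1-k$ or supplies the missing central value. None of this is carried out.

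The non-mixed branch has a similar hole. The claim that $S'_{\varphi^{-1}(L)}\boldsymbol{\cdot}S_Z$ has at least $2p-1$ terms is unsupported: a maximal zero-sum $T$ lying on $L$ only yields $|S_{\varphi^{-1}(L)}|\ge |T|+k\ge p$. Here is what is actually true:
\begin{enumerate}
\item If $\varphi(S')$ has no mixed zero-sum subsequence, then all its zero-sum subsequences lie on one line $L$. Zero-sum subsequences on two different lines share no term, so their concatenation would be mixed.
\item If the part of $\varphi(S')$ on $L$ has at least $p-1$ terms, its subsums cover $L\setminus\{0\}$. This forces the off-$L$ part to be zero-sum free modulo $L$, hence of length at most $p-1$, and only then do you get $2p-1$ terms in $\varphi^{-1}(L)$.
\end{enumerate}
But your argument only guarantees $\max(2,p-k)$ terms on $L$, so for $2\le k\le p-2$ this case also needs a genuine argument. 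For $k=p-1$ it is immediate, since then $\Sigma^0=Z$ and any $T\mid S'$ with $\varphi(T)$ zero-sum works. Your closing remark about combining several disjoint blocks is a direction, not a proof.

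So the proposal establishes $\mathsf d(H_{p^3})\ge 3p-3$ and reduces the upper bound to a zero-sum statement in $C_p^2$. That statement carries the real content of Volkmann's theorem and remains unproved.
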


\begin{lemma}[{\cite[Lemma 2.9]{QuGaoLi}}]\label{lemma:center}
Let $G$ be a finite group, and let $S$ be a sequence over $G$ of
length $|G|+\mathsf d(G)$. If
$\mathsf v_z(S)\ge\mathsf d(G)$
for some $z\in Z(G)$, then $S$ has a product-one subsequence of
length $|G|$.
\end{lemma}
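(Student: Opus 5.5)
The plan is to absorb the central element $z$ into the remaining terms by a twist, reduce to a greedy extraction of short product-one blocks, and then pad with copies of $z$.

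Write $k=\mathsf d(G)$ and $S=z^{[k]}\boldsymbol{\cdot}R$, so that $|R|=|G|$. Since $z\in Z(G)$, twisting by $z^{-1}$ is compatible with reordering. For a term $g$ of $R$, put $g'=g\ast z^{-1}$, and let $R'$ be the sequence of these twisted terms. If $W\mid R$ has twisted counterpart $W'\mid R'$ and $(w'_1,\ldots,w'_m)$ is an ordering of $W'$, then centrality of $z$ gives
\[
w_1\ast\cdots\ast w_m \;=\; z^{m}\ast (w'_1\ast\cdots\ast w'_m).
\]
Hence if $W'$ is product-one and $m=|W|\ge |G|-k$, the sequence $W\boldsymbol{\cdot}z^{[|G|-m]}$ is a subsequence of $S$. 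It uses at most $k$ copies of $z$, which are available. Its length is exactly $|G|$. Ordering it as $W$ (in the ordering of $W'$) followed by the copies of $z$, its product is $z^{m}\ast z^{|G|-m}=z^{|G|}=1$. So the whole task reduces to finding a product-one subsequence $W'$ of $R'$ with $|W'|\ge |G|-k$.

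To find $W'$, I would extract blocks greedily. As long as the unused part of $R'$ has length at least $k+1=\mathsf d(G)+1$, it is not product-one free, by the definition of $\mathsf d(G)$. So it contains a nonempty product-one subsequence, which we remove as the next block. This yields mutually disjoint product-one subsequences $U_1,\ldots,U_t$ of $R'$ whose leftover has length at most $k$.

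Concatenating orderings of the $U_i$ that each multiply to $1$ shows that $W'=U_1\boldsymbol{\cdot}\ldots\boldsymbol{\cdot}U_t$ is product-one. Its length is at least $|R'|-k=|G|-k$. This quantity is positive, because a sequence consisting of $|G|$ copies of one element $g$ contains $g^{[\ord(g)]}$, so $\mathsf d(G)<|G|$. In particular $t\ge1$ and $W'$ is nonempty.

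The only delicate point is the bookkeeping in the first step: the padding must use at most $k$ copies of $z$, and the accumulated power of $z$ must cancel. Both are handled by the twist $g\mapsto g\ast z^{-1}$ together with $z^{|G|}=1$.
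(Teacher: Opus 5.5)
Your argument is correct and is essentially the standard proof of this lemma, which the paper quotes from Qu, Gao and Li without reproducing it: translate the $|G|$ non-reserved terms by the central element $z^{-1}$, greedily extract disjoint product-one blocks using the definition of $\mathsf d(G)$ so that at least $|G|-\mathsf d(G)$ terms are covered, and pad the untranslated blocks with copies of $z$ so that the total product is $z^{|G|}=1$. The one flaw is your side remark that $\mathsf d(G)<|G|$: a single constant sequence of length $|G|$ having a product-one subsequence does not show that every sequence of that length has one (the pigeonhole argument on prefix products does), but this positivity is never needed, since the padding step works unchanged when $W'$ is empty.
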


\begin{lemma} (see \cite{Reiher}, or \cite[Theorem 4.2.10]{GRuzsa}) \label{lemma:ranktwoEGZ}
Every sequence of length $4p-3$ over $C_p^2$ contains a zero-sum
subsequence of length $p$.
\end{lemma}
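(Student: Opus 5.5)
This is Reiher's theorem (the Kemnitz conjecture), and the paper only quotes it. If I had to prove it from scratch I would follow Reiher's polynomial method: use Chevalley--Warning congruences mod $p$ for the number of zero-sum subsequences of prescribed lengths, and combine them by double counting over subsequences.

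\textbf{Setup and basic congruence.} Suppose, for contradiction, that some $S$ of length $4p-3$ over $C_p^2$, viewed in $\mathbb F_p^2$, has no zero-sum subsequence of length $p$. For any $J\mid S$ and integer $k$, let $N_k(J)$ be the number of zero-sum subsequences of $J$ of length $k$, counted by index sets. For $J=(a_i,b_i)_{i\le m}$, consider the system
$$\sum_i a_ix_i^{p-1}=0,\qquad \sum_i b_ix_i^{p-1}=0,\qquad \sum_i x_i^{p-1}=0$$
in $m$ variables over $\mathbb F_p$. The total degree is $3(p-1)$, so Chevalley--Warning applies whenever $m\ge 3p-2$. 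The support of a solution is a zero-sum subsequence whose length is divisible by $p$. Each support of size $t$ contributes $(p-1)^t\equiv(-1)^t$ solutions. Hence for every $J\mid S$ with $|J|\ge 3p-2$,
$$1-N_p(J)+N_{2p}(J)-N_{3p}(J)+\cdots\equiv 0 \pmod p,$$
taking signs by parity of $p$. Since $|S|=4p-3<4p$, only lengths up to $3p$ occur.

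\textbf{Further congruences.} Dropping the third equation, or replacing it by $\sum x_i^{p-1}=c$, gives analogous congruences involving $N_{p-1},N_{2p-1},N_{3p-1}$ etc. These hold once $|J|$ exceeds $2(p-1)$ plus the degree of the extra equation. Under the assumption $N_p\equiv 0$ on all subsequences, I will record the following:
\begin{itemize}
\item $1+N_{2p}(J)-N_{3p}(J)\equiv0$ for $|J|\in\{3p-2,3p-1\}$ and for $J=S$;
\item relations tying $N_{p-1}(J)$, $N_{2p-1}(J)$, $N_{2p-2}(J)$ to these.
\end{itemize}
One genuinely combinatorial input is also needed. If $J$ has length $3p-1$ or so, then any zero-sum subsequence of length $3p$ would split, by the Erd\H os--Ginzburg--Ziv/Olson bound in rank two, into pieces one of which has length $p$. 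The same argument should give $N_{3p}(J)=0$ for the relevant $J$, and should also force a zero-sum subsequence of length $2p$ to have a complement with no $(p-1)$-zero-sum of a certain kind.

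\textbf{Double counting.} Sum the congruences over all $J\mid S$ of a fixed length, say $|J|=3p-2$ and $|J|=3p-1$. A fixed zero-sum subsequence of length $k$ lies in $\binom{4p-3-k}{|J|-k}$ such $J$, and these binomial coefficients can be evaluated mod $p$ by Lucas' theorem. This converts the local congruences into global ones for $N_{p-1}(S)$, $N_{2p-1}(S)$, $N_{2p}(S)$, $N_{3p-1}(S)$, $N_{3p}(S)$. Eliminating variables then gives a nonzero constant $\equiv0\pmod p$ (Reiher obtains $-1\equiv 0$ or $2\equiv 0$). That is a contradiction because $p$ is odd, and $p=2$ is checked directly.

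\textbf{Main obstacle.} The hard step is finding the right auxiliary congruence to eliminate $N_{2p-1}$ and $N_{3p-1}$. The first Chevalley--Warning relation alone is not enough. I would first try Reiher's choice: count over $J$ of length $3p-3$ with the system $\sum a_ix_i^{p-1}=\sum b_ix_i^{p-1}=0$ together with $\sum x_i^{p-1}=p-1$, giving $1-N_{p-1}(J)+N_{2p-1}(J)\equiv0$. Then verify that the resulting linear system of congruences is inconsistent mod $p$.
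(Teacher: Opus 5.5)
The paper gives no proof of this lemma; it only quotes Reiher's theorem, so your sketch has to stand on its own. Its framework (Chevalley--Warning, double counting with Lucas, complementation) is Reiher's. As written, though, the congruences you propose to feed into the elimination are either vacuous or false, so no contradiction comes out.

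\textbf{Problems with the congruences you propose.}
\begin{itemize}
\item Summing over $J\mid S$ with $|J|\in\{3p-2,3p-1\}$ gives nothing new. The weights $\binom{4p-3-k}{|J|-k}$ either vanish mod $p$ or merely reproduce the congruences for $S$ itself. For example, $\binom{4p-3}{3p-2}\equiv\binom32\binom{p-3}{p-2}=0$ and $\binom{2p-3}{p-2}\equiv0$.
\item Your fallback congruence $1-N_{p-1}(J)+N_{2p-1}(J)\equiv0$ for $|J|=3p-3$ is not a Chevalley--Warning consequence. Three forms of degree $p-1$ in $3p-3$ variables is exactly the excluded borderline case. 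Also, with $\sum x_i^{p-1}=-1$ the empty support is not a solution, so no constant $1$ appears.
\item That congruence is in fact false even when $N_p(J)=0$. Take $p=3$, a basis $e_1,e_2$, and $J=0\boldsymbol{\cdot}e_1^{[2]}\boldsymbol{\cdot}e_2^{[2]}\boldsymbol{\cdot}(e_1+e_2)$. Then $N_2(J)=N_3(J)=0$ and $N_5(J)=1$, which gives $2\not\equiv0\pmod 3$.
\item The fact $N_{3p}(S)=0$ does not come from an ``EGZ/Olson bound in rank two''. Olson only gives a zero-sum of length at most $2p-1$, and rank-two EGZ is the statement being proved. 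It follows instead from applying your basic congruence to $3p-1$ terms of a $3p$-term zero-sum subsequence and then complementing.
\end{itemize}

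\textbf{The two relations that are actually needed.}
\begin{itemize}
\item First, append one extra $0$ to a $(3p-3)$-term $J$, so there are $3p-2$ variables. This gives $1-N_{p-1}(J)-N_p(J)+N_{2p-1}(J)+N_{2p}(J)\equiv0$. Sum over all $(3p-3)$-term $J\mid S$; the Lucas weights are $3,2,2,1,1$. Using $N_p(S)=0$, this yields $3-2N_{p-1}(S)+N_{2p-1}(S)+N_{2p}(S)\equiv0$.
\item Second, $N_{p-1}(S)\equiv N_{3p-1}(S)$. Count pairs $(X,Y)$ of disjoint zero-sum subsequences with $|X|=p-1$ and $|Y|=2p$.
\begin{itemize}
\item For fixed $X$, the number of $Y$ is $N_{2p}(S\boldsymbol{\cdot}X^{[-1]})\equiv-1$, since $|S\boldsymbol{\cdot}X^{[-1]}|=3p-2$.
\item For fixed $Z=X\boldsymbol{\cdot}Y$, a zero-sum sequence of length $3p-1$, the number of $X$ is $N_{p-1}(Z)\equiv-1$. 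This holds because $N_{2p-1}(Z)=N_p(Z)=0$ by complementation inside $Z$.
\end{itemize}
\end{itemize}
Combine these with $1+N_{2p}(S)\equiv0$ and $N_{p-1}(S)-N_{2p-1}(S)+N_{3p-1}(S)\equiv0$. This gives $N_{2p-1}(S)\equiv2N_{p-1}(S)$, hence $N_{2p}(S)\equiv-3$ and also $N_{2p}(S)\equiv-1$. So $2\equiv0\pmod p$, which is the desired contradiction for odd $p$.
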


\begin{lemma}\cite[Theorem~6.7(2)]{GaoGeroldingersurvey}\label{lemma:ranktwoshort}
Every sequence of length $3p-2$ over $C_p^2$ contains a zero-sum
subsequence of length $p$ or $2p$.
\end{lemma}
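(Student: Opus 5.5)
The plan is to prove the statement directly with the Chevalley--Warning theorem rather than by reducing it to other zero-sum results. Indeed, $3p-2$ is exactly one more than $3(p-1)$, which is the total degree of the natural system of three polynomial conditions (two for the coordinates, one for the length).

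Write $S=u_1\boldsymbol{\cdot}\ldots\boldsymbol{\cdot}u_{3p-2}$ with $u_i=(a_i,b_i)\in\mathbb F_p^2$. In the variables $x_1,\ldots,x_{3p-2}$ over $\mathbb F_p$ consider
$$f_1=\sum_{i=1}^{3p-2}a_ix_i^{p-1},\qquad f_2=\sum_{i=1}^{3p-2}b_ix_i^{p-1},\qquad f_3=\sum_{i=1}^{3p-2}x_i^{p-1}.$$
Each $f_j$ has degree at most $p-1$, so the sum of the degrees is at most $3p-3<3p-2$, the number of variables. By the Chevalley--Warning theorem, the number of common zeros in $\mathbb F_p^{3p-2}$ is divisible by $p$. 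Since $x=0$ is a common zero, there is a nontrivial common zero $x=(x_1,\ldots,x_{3p-2})$.

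Next we translate this zero back into a subsequence. Put $J=\{i:x_i\neq0\}\neq\emptyset$ and let $T=\mathop{\bullet}_{i\in J}u_i$. By Fermat, $x_i^{p-1}=1$ for $i\in J$ and $0$ otherwise. Hence $f_3(x)=0$ gives $|J|\equiv0\pmod p$, while $f_1(x)=f_2(x)=0$ give $\sigma(T)=0$. Since $1\le|J|\le3p-2<3p$, we get $|J|\in\{p,2p\}$, so $T$ is a zero-sum subsequence of length $p$ or $2p$.

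No step here is really an obstacle; the only point needing care is the degree count. It uses exactly $3p-2>3(p-1)$ variables, which is why the length $3p-2$ is the right threshold, and why the bound $|J|\le 3p-2$ rules out $|J|=3p$. If one preferred to avoid Chevalley--Warning, the fallback would be to combine Lemma~\ref{lemma:ranktwoEGZ}-type arguments with Olson's bound $\mathsf D(C_p^2)=2p-1$. However, the polynomial argument above is self-contained and is the route I would take.
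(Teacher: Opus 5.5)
Your proof is correct. Three polynomials of degree at most $p-1$ in $3p-2>3(p-1)$ variables have, by Chevalley--Warning, a nontrivial common zero. Its support indexes a zero-sum subsequence whose length is a positive multiple of $p$ smaller than $3p$, hence $p$ or $2p$.

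The paper gives no argument of its own here and simply quotes Theorem~6.7(2) of the Gao--Geroldinger survey; what you wrote is the standard Chevalley--Warning proof of that fact. The paper does, however, already contain the statement implicitly. Lemma~\ref{lemma:relative} is proved without using the present lemma. Apply it with $N=3p-2$ to a sequence $A$ with no zero-sum subsequence of length $p$ or $2p$: it gives $|(\Sigma_p(A)\cup\Sigma_{2p}(A))\cap H|\ge p$, but this set lies in $H\setminus\{0\}$ and so has at most $p-1$ elements, a contradiction.

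That proof runs on the same mechanism as yours, restricted to the cube $\{0,1\}^N$, where your $x_i^{p-1}$ is just $x_i$. The factors $1-J^{p-1}$ and $1-Q^{p-1}$ encode your length condition $f_3$ and one coordinate condition, and the multilinear reduction of Lemma~\ref{lemma:multilinearreduction} takes the place of the Chevalley--Warning count. The one difference is that the third factor is $P(L)$ rather than $1-L^{p-1}$. This turns a bare existence statement into a lower bound on how many elements of $H$ are hit by sums of length $p$ or $2p$.

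So your route is shorter but imports Chevalley--Warning and gives only existence. The paper's polynomial argument is self-contained and yields the quantitative version used later in the proof of Theorem~\ref{theorem:main}.

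The fallback you mention, via Lemma~\ref{lemma:ranktwoEGZ} and Olson's bound, is unnecessary. As sketched, it is also unclear how it would work: Lemma~\ref{lemma:ranktwoEGZ} needs $4p-3$ terms, and Olson's bound does not control the lengths. It plays no role in your argument, though.
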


\begin{lemma}[Cauchy-Davenport Theorem]\cite[Theorem 2.3]{Nathanson}\label{lemma:Cauchy-Davenport}
Let $h\ge 2$. Let $p$ be a prime number, and let $A_1,\ldots,A_h$ be nonempty subsets of
$C_p$. Then
$|A_1+\cdots+A_h|\ge
\min\left(p, \sum\limits_{i=1}^h |A_i|-h+1 \right).$
\end{lemma}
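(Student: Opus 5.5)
The statement is the classical Cauchy--Davenport inequality, and I would first reduce it to the case $h=2$ by induction on $h$. Assume the bound for $h-1$ sets and put $B=A_1+\cdots+A_{h-1}$, so that $|B|\ge\min\bigl(p,\sum_{i=1}^{h-1}|A_i|-h+2\bigr)$. Applying the case $h=2$ to $B$ and $A_h$ gives $|B+A_h|\ge\min(p,|B|+|A_h|-1)$. If $|B|=p$, then already $|A_1+\cdots+A_h|=p$. Otherwise $|B|\ge\sum_{i=1}^{h-1}|A_i|-h+2$, and substituting yields $\min\bigl(p,\sum_{i=1}^h|A_i|-h+1\bigr)$. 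So everything rests on the two-set statement $|A+B|\ge\min(p,|A|+|B|-1)$ for nonempty $A,B\subseteq C_p$.

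For $h=2$ I would use the Combinatorial Nullstellensatz over $\mathbb F_p\cong C_p$. First I would reduce to the case $|A|+|B|-1\le p$. If $|A|+|B|>p$, then for every $x$ the sets $A$ and $x-B$ have total size exceeding $p$, so they intersect; hence $A+B=C_p$. In the remaining case I would suppose, for contradiction, that $|A+B|\le |A|+|B|-2$. Choose a set $C\supseteq A+B$ with $|C|=|A|+|B|-2$, which is possible since this number is less than $p$. Consider the polynomial
$$f(x,y)=\prod_{c\in C}(x+y-c)\in\mathbb F_p[x,y],$$
which vanishes on $A\times B$. Its degree is $|A|+|B|-2=(|A|-1)+(|B|-1)$. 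The coefficient of $x^{|A|-1}y^{|B|-1}$ equals $\binom{|A|+|B|-2}{|A|-1}$, and this is nonzero modulo $p$ because $|A|+|B|-2<p$. The Nullstellensatz then gives a point of $A\times B$ where $f$ is nonzero, a contradiction.

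The main obstacle is that the Nullstellensatz is not among the tools recorded earlier in the paper. If it is to be avoided, the fallback is the Davenport transform: induct on $|B|$. If $|B|=1$ the bound is trivial. Otherwise, with $A+B\ne C_p$ and $|B|\ge2$, pick $a\in A$ and $b\in B$ such that $e=a-b$ satisfies $e+B\not\subseteq A$. Such a choice exists, since otherwise $A$ would be closed under adding differences of elements of $B$, forcing $A=C_p$ because $p$ is prime. Set $A'=A\cup(e+B)$ and $B'=B\cap(A-e)$. Then $A'+B'\subseteq A+B$, $|A'|+|B'|=|A|+|B|$, and $1\le|B'|<|B|$, so the induction hypothesis applies to $(A',B')$. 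The delicate point to check is this existence claim for $e$, which uses primality of $p$.
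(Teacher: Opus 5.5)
Your proof is correct. The paper gives no argument for this lemma; it quotes it from Nathanson's book (Theorem 2.3 there). The proof in that source consists of your outer frame plus your fallback. The $h$-fold bound is obtained from the two-set bound by induction on $h$, exactly as in your first paragraph. The two-set bound is proved by induction on $|B|$ using Davenport's $e$-transform $A\mapsto A\cup(e+B)$, $B\mapsto B\cap(A-e)$, with $e=a-b$ chosen so that $B\cap(A-e)$ is a nonempty proper subset of $B$. Your justification that such an $e$ exists is the right one: otherwise $A+(B-B)\subseteq A$, a nonzero element of $B-B$ generates $C_p$, so $A=C_p$ and hence $A+B=C_p$, a contradiction.

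Your primary route for $h=2$ is genuinely different: it is the Alon--Nathanson--Ruzsa polynomial proof, and its details check out. These are the pigeonhole reduction to $|A|+|B|\le p$, the padding of $A+B$ to a set $C$ of size $|A|+|B|-2<p$, and the nonvanishing modulo $p$ of the coefficient $\binom{|A|+|B|-2}{|A|-1}$ of $x^{|A|-1}y^{|B|-1}$. That route buys brevity and avoids choosing $e$, at the price of importing the Combinatorial Nullstellensatz. The $e$-transform is longer but completely elementary and self-contained. The paper uses the lemma only as a black box, with $h=p$ in Lemma~\ref{lemma:height} and with $h=2$ in the final sumset step, so either route suffices.
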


Then we shall derive the following lemma by applying the Cauchy-Davenport Theorem.

\begin{lemma}\label{lemma:height}
Let $A$ be a sequence over $C_p$ such that
$|A|=p+k$, $\mathsf h(A)=p$ and $0\le k\le p-1$.
Then
$|\Sigma_p(A)|\ge k+1$.
\end{lemma}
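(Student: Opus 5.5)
Let $g\in C_p$ be an element with $\mathsf v_g(A)=p$, and write $A=g^{[p]}\boldsymbol{\cdot}R$ with $|R|=k$. Every element of $R$ is a term of $A$, possibly equal to $g$ (only if $\mathsf v_g(A)$ counts all copies; since $\mathsf h(A)=p$ exactly, $R$ contains no copy of $g$). I plan to translate so that $g=0$: replace $A$ by $A-g$ (subtract $g$ from every term). Since we are taking sums of exactly $p$ terms, $\Sigma_p(A-g)=\Sigma_p(A)-pg=\Sigma_p(A)$, so the cardinality is unchanged. After this, $A=0^{[p]}\boldsymbol{\cdot}R$ where $R=r_1\boldsymbol{\cdot}\ldots\boldsymbol{\cdot}r_k$ consists of nonzero elements of $C_p$.

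The key observation is that, because we have $p\ge k$ zeros available, for every subsequence $T\mid R$ we can pad $T$ with $p-|T|$ zeros to get a $p$-term subsequence of $A$ with the same sum. Hence
$$\Sigma_p(A)\supseteq\{\sigma(T):T\mid R\}\supseteq \{0,r_1\}+\{0,r_2\}+\cdots+\{0,r_k\},$$
where the empty subsequence gives $0$. If $k=0$ the claim $|\Sigma_p(A)|\ge 1$ is trivial (the sum of the $p$ zeros). For $k\ge 1$, each set $\{0,r_i\}$ has exactly two elements since $r_i\ne 0$. For $k\ge2$ I apply Lemma~\ref{lemma:Cauchy-Davenport} with $h=k$ and $A_i=\{0,r_i\}$ to obtain
$$|\{0,r_1\}+\cdots+\{0,r_k\}|\ge\min(p,2k-k+1)=\min(p,k+1)=k+1,$$
using $k\le p-1$; for $k=1$ the set $\{0,r_1\}$ already has $2=k+1$ elements.

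There is no serious obstacle; the only points needing care are that the translation preserves $|\Sigma_p|$ (because exactly $p$ terms are summed and $pg=0$), that the number of available zeros $p$ is at least $|T|\le k$ for padding, and that $\mathsf h(A)=p$ guarantees the remaining terms $r_i$ are distinct from $g$, i.e. nonzero after translation, which is what makes each $A_i$ a genuine two-element set.
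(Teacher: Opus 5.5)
Your proof is correct and is essentially the paper's argument. The paper partitions $A$ into $p$ blocks, each containing one copy of $g$ and at most one other term. It then applies Lemma~\ref{lemma:Cauchy-Davenport} with $h=p$ to the sets $\{g,r_i\}$ and $\{g\}$, and after translating by $g$ this is exactly your sumset $\{0,r_1\}+\cdots+\{0,r_k\}$. Your version makes that translation explicit and treats $k\le 1$ separately, a case split the paper avoids by always having $h=p\ge 2$.
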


\begin{proof} Let $g\in C_p$ be such that $\mathsf v_g(A)=p$. Since
$|A|=p+k$ with $0\leq k\leq p-1$, we may partition all the terms of
$A$ into $p$ subsequences $A_1,\ldots,A_p$ such that each $A_i$
contains exactly one copy of $g$ and $|A_i|\in \{1,2\}$.
Moreover, if $|A_i|=2$, then the other term of $A_i$ is different
from $g$. Thus each $A_i$ may be regarded as a nonempty subset of
$C_p$. Clearly,
$A_1+\cdots+A_p\subseteq\Sigma_p(A).$
By Lemma \ref{lemma:Cauchy-Davenport}, we have
$|\Sigma_p(A)|
\ge |A_1+\cdots+A_p|
\ge
\min\left(p,\sum_{i=1}^p|A_i|-p+1\right)
=\min(p,k+1)
=k+1.$
\end{proof}

We recall the notion of a multilinear polynomial. A polynomial
$f(X_1,\ldots,X_N)\in\mathbb F_p[X_1,\ldots,X_N]$ is called \emph{multilinear}
if its degree with respect to each indeterminate $X_i$ is at most $1$.
Equivalently, $f$ can be written in the form
\begin{equation}\label{equation:formf}
f(X_1,\ldots,X_N)=
\sum_{I\subseteq[1,N]}
c_I\prod_{i\in I}X_i \mbox{ with } c_I\in\mathbb F_p.
\end{equation}

\begin{lemma}\label{lemma:multilinearreduction} Let $N\ge 1$, and let $p$ be a prime. Let $f(X_1,\ldots,X_N)\in \mathbb{F}_p[X_1,\ldots,X_N]$ be a multilinear polynomial. If $f(X_1,\ldots,X_N)$ vanishes at every point $x\in\{0,1\}^N$, then $f$ is the zero polynomial. In particular, if $g\in \mathbb{F}_p[X_1,\ldots,X_N]$ is a multilinear polynomial such that $f(x)=g(x)$ for every $x\in\{0,1\}^N$, then $f=g$.
\end{lemma}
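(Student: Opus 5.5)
We argue by induction on $N$. The case $N=0$ (a constant) is immediate, and the case $N=1$ goes as follows. Write $f=c_\emptyset+c_{\{1\}}X_1$. Then $f(0)=c_\emptyset=0$, and $f(1)=c_\emptyset+c_{\{1\}}=0$. Hence both coefficients vanish.

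For the inductive step, take $N\ge 2$. Using the form \eqref{equation:formf}, split $f$ according to whether $X_N$ occurs:
$$f(X_1,\ldots,X_N)=g(X_1,\ldots,X_{N-1})+X_N\,h(X_1,\ldots,X_{N-1}).$$
Here $g$ collects the terms with $N\notin I$, and $h$ collects the coefficients $c_I$ with $N\in I$, with $X_N$ removed. Both $g$ and $h$ are multilinear in $N-1$ variables.

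Now restrict to the cube. For $x'\in\{0,1\}^{N-1}$, setting $X_N=0$ gives $g(x')=f(x',0)=0$. Setting $X_N=1$ then gives $h(x')=f(x',1)-g(x')=0$. So $g$ and $h$ both vanish on $\{0,1\}^{N-1}$. By the induction hypothesis $g=h=0$ as polynomials, and therefore $f=0$.

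The ``in particular'' statement follows by applying this to $f-g$. The difference of two multilinear polynomials is again multilinear, and $f-g$ vanishes on $\{0,1\}^N$, so $f-g=0$.

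An alternative is a direct Möbius-inversion argument. One shows by induction on $|I|$ that $c_I=\sum_{J\subseteq I}f(\mathbf 1_J)=0$, where $\mathbf 1_J$ is the indicator vector of $J$. The inductive argument above is cleaner.

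There is no real obstacle here. The only point that needs care is that the decomposition with respect to $X_N$ keeps $g$ and $h$ multilinear, and that is immediate from \eqref{equation:formf}. Note also that the argument never uses that the field is $\mathbb F_p$: it works over any field, because only the values $0$ and $1$ are substituted.
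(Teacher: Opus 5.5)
Your proof is correct, but it takes a different route from the paper. The paper does not induct on $N$. Assuming $f\neq 0$, it uses $f(\mathbf 0)=0$ to kill the constant term and chooses a nonempty $I$, minimal under inclusion, with $c_I\neq 0$. It then evaluates at the indicator vector $\mathbf 1_I$ of $I$. Every monomial not supported inside $I$ vanishes there, so $f(\mathbf 1_I)=\sum_{J\subseteq I}c_J=c_I\neq 0$, a contradiction.

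This triangular relation is what your ``alternative'' is reaching for, so that aside is essentially the paper's proof. However, your inversion formula should read $c_I=\sum_{J\subseteq I}(-1)^{|I\setminus J|}f(\mathbf 1_J)$. The missing signs are harmless here only because every $f(\mathbf 1_J)$ is $0$.

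As for what each route buys: the paper's version is a one-shot argument tailored to the cube, exploiting that substituting $0$ annihilates monomials. Your splitting $f=g+X_Nh$ costs a base case and an induction, but it is the more portable template. Replace the two-point check in $X_N$ by the fact that a univariate polynomial of degree $<|A_N|$ vanishing on $A_N$ is zero. The same induction then shows that a polynomial with $\deg_{X_i}f<|A_i|$ for all $i$ that vanishes on a grid $A_1\times\cdots\times A_N$ is zero, which is the standard lemma behind the Combinatorial Nullstellensatz. As you note, neither argument uses anything specific to $\mathbb F_p$.

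A cosmetic point: your $g$ for the $X_N$-free part clashes with the $g$ in the ``in particular'' clause, so rename one of them.
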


\begin{proof} Suppose that $f$ is not the zero polynomial. Since $f(\mathbf 0)=0$,
the constant term of $f$ is zero. Hence there exists a nonempty subset
$I\subseteq[1,N]$ such that
$c_I\prod_{i\in I}X_i$ occurs in \eqref{equation:formf} with
$c_I\ne0$. Choose such an $I$ minimal with respect to inclusion.

Let $x=(x_1,\ldots,x_N)\in\{0,1\}^N$ be defined by
$x_i=1$ if $i\in I$ and $x_i=0$ otherwise. Then
$$f(x)=\sum_{J\subseteq I}c_J.$$
By the minimality of $I$, we have $c_J=0$ for every proper subset
$J\subsetneq I$, and therefore $f(x)=c_I\ne0$. This contradicts the
hypothesis that $f$ vanishes at every point of $\{0,1\}^N$.
Hence $f$ is the zero polynomial.

For the last assertion, apply the first part to $f-g$.
\end{proof}

The following lemma is the main additive ingredient in the proof.
The hypothesis $N<3p$ is exactly the range needed below.

\begin{lemma}\label{lemma:relative}
Let
$A=a_1\boldsymbol{\cdot}\ldots\boldsymbol{\cdot}a_N$
be a sequence over $V=C_p^2$ with $N<3p$, and let $H\le V$ be a
subgroup of order $p$. Suppose that $A$ contains no zero-sum
subsequence of length $p$ or $2p$. Then
$\left|\left(\Sigma_p(A)\cup\Sigma_{2p}(A)\right)\cap H\right|
\ge N-2p+2.$
\end{lemma}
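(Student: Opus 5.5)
The plan is a polynomial-method argument on the cube $\{0,1\}^N$, using Lemma~\ref{lemma:multilinearreduction} to turn a degree count into the desired bound. If $N\le 2p-2$ the right-hand side is $\le 0$ and there is nothing to prove, so I will assume $N\ge 2p-1$.

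\emph{Coordinates and the set $X$.} Fix a generator $e_1$ of $H$ and pick $e_2\in V\setminus H$, so that $V=H\oplus\langle e_2\rangle$. Write $a_i=\alpha_ie_1+\beta_ie_2$ with $\alpha_i,\beta_i\in\mathbb F_p$. Identify $H$ with $\mathbb F_p$ via $e_1$, and set
$$X=\left(\Sigma_p(A)\cup\Sigma_{2p}(A)\right)\cap H\subseteq\mathbb F_p,\qquad m=|X|.$$
Since $A$ has no zero-sum subsequence of length $p$ or $2p$, we have $0\notin X$. The goal is $m\ge N-2p+2$.

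\emph{The polynomial.} Consider
$$f(x_1,\ldots,x_N)=\Bigl(1-\bigl(\textstyle\sum_i x_i\bigr)^{p-1}\Bigr)\Bigl(1-\bigl(\textstyle\sum_i \beta_ix_i\bigr)^{p-1}\Bigr)\prod_{c\in X}\Bigl(\textstyle\sum_i\alpha_ix_i-c\Bigr),$$
which has degree at most $2(p-1)+m$. Evaluate $f$ at a point $x\in\{0,1\}^N$, and let $T$ be the subsequence $\{a_i:x_i=1\}$.
\begin{itemize}
\item By Fermat, the first factor is nonzero only if $|T|\equiv 0\pmod p$. Because $N<3p$, this forces $|T|\in\{0,p,2p\}$.
\item The second factor is nonzero only if $\sigma(T)\in H$.
\item If $|T|\in\{p,2p\}$ and $\sigma(T)\in H$, then $\sigma(T)\in X$ by definition of $X$, so the third factor vanishes.
\end{itemize}
Hence $f$ vanishes on $\{0,1\}^N\setminus\{\mathbf 0\}$, while
$$f(\mathbf 0)=\prod_{c\in X}(-c)\neq 0,$$
the nonvanishing coming from $0\notin X$.

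\emph{Multilinear reduction and the degree count.} Replace every power $x_i^k$ with $k\ge1$ by $x_i$ to obtain a multilinear $g$. Then $g$ agrees with $f$ on $\{0,1\}^N$, and $\deg g\le\deg f\le 2p-2+m$. The multilinear polynomial $f(\mathbf 0)\prod_{i=1}^N(1-x_i)$ also takes the same values on the cube. By Lemma~\ref{lemma:multilinearreduction} the two multilinear polynomials are equal. The second has degree exactly $N$, because its top coefficient is $\pm f(\mathbf 0)\ne0$. Therefore $N\le 2p-2+m$, which is the claim.

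\emph{Main obstacle.} The delicate point is the bookkeeping of the first factor: one must check that $N<3p$ really excludes $|T|=3p$, and that the length-$0$ term is the only surviving point. It is also what makes $0\notin X$, i.e.\ the absence of zero-sum subsequences of length $p$ and $2p$, essential, since that is what keeps $f(\mathbf 0)\neq0$. I also need $e_2$ chosen so that the second factor tests exactly membership in $H$; with $V=H\oplus\langle e_2\rangle$ this holds automatically.
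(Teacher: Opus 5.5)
Your proposal is correct and follows essentially the same polynomial-method argument as the paper. It uses the same product $(1-J^{p-1})(1-Q^{p-1})P(L)$, gets nonvanishing at $\mathbf 0$ from the absence of zero-sum subsequences of length $p$ or $2p$, and concludes by comparing degrees with $f(\mathbf 0)\prod_{i=1}^N(1-x_i)$ via Lemma~\ref{lemma:multilinearreduction}. The only cosmetic difference is that you take the root set to be $(\Sigma_p(A)\cup\Sigma_{2p}(A))\cap H$ from the start, whereas the paper defines it through evaluations of $L$ on the cube and identifies it with that set afterwards.
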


\begin{proof} We may assume that $N\ge 2p-1$, since otherwise
$N-2p+2\leq0$, and hence the conclusion follows trivially.

Write $V=H\bigoplus K$ where $K\simeq C_p$. Let $\theta_1$ and $\theta_2$ be the canonical projections of $V$ onto the components $H$ and $K$, respectively.  Then we consider the polynomial ring $\mathbb F_p[X_1,\ldots,X_{N}]$ over the field $\mathbb F_p$ with $N$ indeterminates. Since $C_p$ is isomorphic to the additive group of the field $\mathbb{F}_p$, we can view $\theta_i$ as the map of $V$ to $\mathbb{F}_p$ for $i\in [1,2]$. Then we take the following three polynomials $L(X_1,\ldots,X_N), Q(X_1,\ldots,X_N), J(X_1,\ldots,X_N)\in \mathbb F_p[X_1,\ldots,X_{N}]$, where
$$\begin{aligned}
&L(X_1,\ldots,X_N)=\sum_i\theta_1(a_i)X_i,  \\ 
&Q(X_1,\ldots,X_N)=\sum_i \theta_2(a_i)X_i, \\  
&J(X_1,\ldots,X_N)=\sum_{i=1}^N X_i. \end{aligned}
$$

For $x=(x_1,\ldots,x_N)\in\{0,1\}^N$, let $A_x$ denote the
subsequence of $A$ consisting of all terms $a_i$ whenever $x_i=1$, and we write
$L(x)=L(x_1,\ldots,x_N)$, and similarly for $J(x)$ and $Q(x)$.
Then we define
\begin{equation}\label{equation:Rdefinition}
R=\{L(x):x\in\{0,1\}^N\setminus\{\mathbf 0\},
      \ J(x)=Q(x)=0\}.
\end{equation}

Now we show that
\begin{equation}\label{equation:0notinR}
0\notin R,
\end{equation}
that is $R\subseteq\mathbb F_p\setminus \{0\}$.
Assume to the contrary that $0\in R$. Then there exists
$x=(x_1,\ldots,x_N)\in\{0,1\}^N\setminus\{\mathbf 0\}$ such that
$L(x)=J(x)=Q(x)=0$. Since $J(x)=0$ in $\mathbb F_p$ and $N<3p$,
the number of indices $i\in [1,N]$ with $x_i=1$ is either $p$ or $2p$.
Then $|A_x|\in\{p,2p\}$. Moreover,
$L(x)=Q(x)=0$ implies that
$\sigma(A_x)=0$ in $V=H\oplus K$. Thus $A$ contains a zero-sum
subsequence of length $p$ or $2p$, contradicting the hypothesis.
This proves \eqref{equation:0notinR}.

Then we take the univariate polynomial
\begin{equation}\label{equation:P(Y)definition}
P(Y)=\prod_{r\in R}(Y-r)\in\mathbb F_p[Y],
\end{equation}
where the empty product is understood to be the constant polynomial $1$.
We further take the polynomial $F(X_1,\ldots,X_N) \in \mathbb{F}_p[X_1,\ldots,X_N]$, where \begin{equation}\label{equation:Fdefinition}
F(X_1,\ldots,X_N)=
\left(1-J(X_1,\ldots,X_N)^{p-1}\right)
\left(1-Q(X_1,\ldots,X_N)^{p-1}\right)
P(L(X_1,\ldots,X_N)).
\end{equation}

We claim that
\begin{equation}\label{equation:F(x)=0}
F(x)=0\mbox{ for every } x=(x_1,\ldots,x_N)\in\{0,1\}^N\setminus\{\mathbf 0\}.
\end{equation}
Let $x\in\{0,1\}^N\setminus\{\mathbf 0\}$. If $J(x)\ne0$ or
$Q(x)\ne0$ in $\mathbb F_p$, then, by Fermat's
little theorem, the first or the second factor in
\eqref{equation:Fdefinition} vanishes, respectively. Hence, we may assume that $J(x)=Q(x)=0$. By
\eqref{equation:Rdefinition}, we then have $L(x)\in R$, and therefore
$P(L(x))=0$ by \eqref{equation:P(Y)definition}. Thus the third factor
in \eqref{equation:Fdefinition} vanishes. Consequently, $F(x)=0$. This proves \eqref{equation:F(x)=0}.

On the other hand, since $J(\mathbf 0)=Q(\mathbf 0)=L(\mathbf 0)=0$, it follows from \eqref{equation:0notinR}, \eqref{equation:P(Y)definition} and \eqref{equation:Fdefinition} that $$F(\mathbf 0)=P(L(\mathbf 0))=P(0)\neq 0.$$ Combined with \eqref{equation:F(x)=0}, we derive that $F(X_1,\ldots,X_N)$ and
$P(0)\prod_{i=1}^N(1-X_i)$ have the same value at every point of
$\{0,1\}^N$.

Since $x_i^2=x_i$ for every $x_i\in\{0,1\}$, by taking the remainder modulo $X_i^2-X_i$ for each $i\in[1,N]$,
we obtain a multilinear polynomial
$\widetilde F(X_1,\ldots,X_N)\in
\mathbb F_p[X_1,\ldots,X_N]$
such that
$$\widetilde F(x)=F(x)
\qquad\text{for every }x\in\{0,1\}^N.$$
The polynomial $P(0)\prod_{i=1}^N(1-X_i)$ is multilinear and has the
same values as $\widetilde F$ at every point of $\{0,1\}^N$. Hence,
by Lemma~\ref{lemma:multilinearreduction},
$$\widetilde F(X_1,\ldots,X_N)=P(0)\prod_{i=1}^N(1-X_i).$$
Since $P(0)\ne0$, we have $$\deg\widetilde F=N.$$ By \eqref{equation:P(Y)definition} and the definition of $L(X_1,\ldots,X_N)$, we have that $$\deg P(L(X_1,\ldots,X_N))\leq \deg P(Y)=|R|.$$
Moreover, taking the remainder
modulo $X_i^2-X_i$ cannot increase the total degree. Therefore, by
\eqref{equation:Fdefinition}, we conclude that
$N=\deg\widetilde F\leq\deg F\leq(p-1)+(p-1)+|R|=2p-2+|R|$.
Consequently,
\begin{equation}\label{equation:Rbound}
|R|\ge N-2p+2.
\end{equation}

Finally, we identify the set $R$.  If
$x=(x_1,\ldots,x_N)\in\{0,1\}^N\setminus\{\mathbf 0\}$ satisfies $J(x)=0$, then
$0<|A_x|<3p$ and $|A_x|\equiv0\pmod p$, so
$|A_x|\in\{p,2p\}$. Moreover, $Q(x)=0$ means that
$\sigma(A_x)\in H$, and in this case
$L(x)=\sigma(A_x)$. Hence $R=
\left(\Sigma_p(A)\cup\Sigma_{2p}(A)\right)\cap H.$
Together with \eqref{equation:Rbound}, this gives $\left|
\left(\Sigma_p(A)\cup\Sigma_{2p}(A)\right)\cap H
\right|
\ge N-2p+2,$
as desired.
\end{proof}

\begin{lemma}\label{lemma:upperlength}
Let $S$ be a sequence over $G=H_{p^3}$ of length
$|S|=p^3+3p-3$
and suppose that $S$ has no product-one subsequence of length $p^3$.
Then
$|S_K|\le p^3+2p-3$
for every subgroup $K\supseteq Z$ of order $p^2$.
\end{lemma}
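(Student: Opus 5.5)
Argue by contradiction: suppose some subgroup $K\supseteq Z$ of order $p^2$ satisfies $|S_K|\ge p^3+2p-2$. As noted after the definition of the Heisenberg group, $K\cong C_p^2$; in particular $K$ is abelian of exponent $p$ and $\mathsf d(K)=2p-2$. The plan is to apply Gao's prescribed-length theorem directly inside $K$.

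Apply Lemma~\ref{lemma:prescribed} to $A=K$, with $n=\exp(K)=p$ and $k=p^2$. Then $kn=p^3\ge |K|=p^2$, so the hypothesis holds. Hence every sequence over $K$ of length at least $p^3+\mathsf d(K)=p^3+2p-2$ contains a zero-sum subsequence of length $p^3$; this is exactly the consequence recorded right after Lemma~\ref{lemma:prescribed}. Since $|S_K|\ge p^3+2p-2$, the subsequence $S_K$ contains a subsequence $T$ of length $p^3$ whose terms multiply to the identity in $K$. Because $K$ is abelian, the order of multiplication is irrelevant, so $1\in\pi(T)$ and $T$ is a product-one subsequence of $S$ of length $p^3$. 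This contradicts the assumption, so $|S_K|\le p^3+2p-3$.

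There is essentially no obstacle. The only points to check are that $K$ is indeed abelian of exponent $p$, which was established earlier via the Correspondence Theorem and the fact that $H_{p^3}$ has exponent $p$, and that $\mathsf d(C_p^2)=2p-2$, which is the standard value quoted in the paper. The overall length $|S|=p^3+3p-3$ plays no role beyond fixing the setting.
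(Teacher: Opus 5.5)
Your proof is correct and is exactly the paper's argument: assuming $|S_K|\ge p^3+2p-2$, you use $K\cong C_p^2$ and Lemma~\ref{lemma:prescribed} with $n=p$, $k=p^2$ to obtain a zero-sum subsequence of $S_K$ of length $p^3$, which is product-one and gives the contradiction. Your explicit checks that $kn=p^3\ge |K|$ and that commutativity of $K$ makes the ordering irrelevant only spell out what the paper leaves implicit.
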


\begin{proof}
We have $K\cong C_p^2$. If
$|S_K|\ge p^3+2p-2,$
then Lemma~\ref{lemma:prescribed}, applied to $K$, gives a product-one
subsequence of $S_K$ of length $p^3$, a contradiction.
\end{proof}

\section{Proof of Theorem \ref{theorem:main}}

\begin{proof}[Proof of Theorem \ref{theorem:main}]
By Lemma~\ref{lemma:lowerbound} and Lemma~\ref{lemma:dG}, we have $\mathsf d(G)=3p-3$ and
$E(G)\ge \mathsf d(G)+|G|=p^3+3p-3$.
It remains to prove $E(G)\leq p^3+3p-3$. Let $S$ be a sequence over
$G$ of length
$|S|=p^3+3p-3$
and suppose, to the contrary, that
$1\notin\Pi_{p^3}(S)$.

By Lemma~\ref{lemma:center},
$\mathsf v_z(S)\leq \mathsf d(G)-1=3p-4$ for every $z\in Z$. By \eqref{equation:centerisCp}, we see $|Z|=p$, and thus,
\begin{equation}\label{equation:centerbound}
|S_Z|\le p(3p-4)=3p^2-4p.
\end{equation}

Let
$\varphi:H_{p^3}\longrightarrow H_{p^3}/Z\cong C_p^2$
be the canonical epimorphism given as \eqref{equation specializehomo}. Then we show the following claim.

\medskip
\noindent\textbf{Claim 1.}
If $T\mid S$ has length $p^3$ and $\varphi(T)$ is a mixed zero-sum
sequence over $G/Z$, then
$1\in\pi(T)$.

 \smallskip

\noindent {\sl Proof of Claim 1.}
Put
$T_0=T\boldsymbol{\cdot}T_Z^{[-1]}$.
By \eqref{equation:centerbound}, we have
\begin{equation}\label{equation:T0lengthget2p+1}
|T_0|=|T|-|T_Z|=p^3-|T_Z|\ge p^3-|S_Z| \ge 4p.
\end{equation}
 Since every term of $T_Z$ lies in $\ker\varphi=Z$, we have
$\varphi(T)=\varphi(T_0)\boldsymbol{\cdot}0^{[|T_Z|]}.$
Hence $\sigma(\varphi(T_0))=\sigma(\varphi(T))=0$. Since every subgroup of $G/Z$ contains $0$, removing the zero terms
does not affect mixedness.
Therefore, $\varphi(T_0)$ is a mixed
zero-sum sequence over $G/Z$, all of whose terms are nonzero.

Hence, by \eqref{equation:T0lengthget2p+1} and Lemma~\ref{lemma:growth}, we derive that
$|\Omega(\varphi(T_0))|\ge  \min(p,|\varphi(T_0)|-1)=\min(p,|T_0|-1)=p$. Since $\Omega(\varphi(T_0))\subseteq\mathbb F_p$, it follows that
$\Omega(\varphi(T_0))=\mathbb F_p$.
Since
$\varphi(T)=\varphi(T_0)\boldsymbol{\cdot}0^{[|T_Z|]}$ and
the zero terms do not contribute to the set $\Omega(\varphi(T))$, we have
$\Omega(\varphi(T))=
\Omega(\varphi(T_0))=
\mathbb F_p.$
Thus, Claim~1 follows from  Lemma~\ref{lemma:criterion} immediately. \qed

\medskip
Let $\ell$ be maximal such that
$S=T_1\boldsymbol{\cdot}\ldots\boldsymbol{\cdot}T_\ell \boldsymbol{\cdot}S',$
where
$|T_i|=p$
and $\varphi(T_i)$ is zero-sum over $G/Z$ for every $i\in[1,\ell]$.
By Lemma~\ref{lemma:ranktwoEGZ} and the  maximality of $\ell$, we have
$|S'|\le4p-4$.
Therefore
$p^3+3p-3-p\ell\le4p-4,$
and hence
\begin{equation}\label{equation:ell}
\ell\ge p^2.
\end{equation}

\medskip
\noindent\textbf{Claim 2.} There exists a subgroup $\overline K$ of order $p$ in $G/Z$ such
that
$\supp\left(\varphi(T_1\boldsymbol{\cdot}\ldots
\boldsymbol{\cdot}T_\ell)
\right)\subseteq\overline K.$

 \smallskip

\noindent {\sl Proof of Claim 2.} Suppose otherwise.
Then there exist two linearly
independent elements $u,v\in \supp(\varphi(T_1\boldsymbol{\cdot}\ldots\boldsymbol{\cdot}T_\ell))$.
Combined with \eqref{equation:ell}, we may choose a subset
$I$ of $[1,\ell]$ such that $|I|=p^2$
and $u,v\in \supp(\varphi(U))$, where $U=\mathop{\boldsymbol{\cdot}}\limits_{k\in I}T_k.$
Since each $\varphi(T_k)$ is zero-sum,  $\varphi(U)$ is also zero-sum.
Moreover, since $u,v\in \supp(\varphi(U))$, the linear
independence of $u$ and $v$ implies that
$\varphi(U)$ is mixed. Since
$|U|=p^3$, Claim~1 gives $1\in\pi(U)$, contradicting
the assumption that $1\notin\Pi_{p^3}(S)$. This proves Claim 2. \qed

Fix a subgroup $\overline K$ as in Claim~2 and put
$K=\varphi^{-1}(\overline K)$.
Then $K\supseteq Z$, $|K|=p^2$, and
$T_1\boldsymbol{\cdot}\ldots\boldsymbol{\cdot}T_\ell\mid S_K$.
Hence, by \eqref{equation:ell} and Lemma~\ref{lemma:upperlength},
\begin{equation}\label{equation:SKfirst}
p^3\le |S_K|\le p^3+2p-3.
\end{equation}

We next give a consequence of Claim~1 that will be used several
times.

\medskip
\noindent\textbf{Claim 3.}
Let $V\mid S$ be such that $|V|=p^3$ and $\varphi(V)$ is a zero-sum
sequence over $G/Z$. Then
$V\mid S_K.$

\smallskip

\noindent {\sl Proof of Claim 3.}
Since $1\notin\Pi_{p^3}(S)$, it follows from Claim~1 that
$\varphi(V)$ is not mixed. Suppose, to the contrary, that
$V\nmid S_K$. Then $\varphi(V)$ contains a nonzero term outside
$\overline K$. Since $\varphi(V)$ is not mixed, there exists a
subgroup $\overline H$ of order $p$ in $G/Z$ such that
$$
\supp(\varphi(V))\subseteq\overline H
\qquad\text{and}\qquad
\overline H\ne\overline K.
$$
Put $H=\varphi^{-1}(\overline H)$. Then $V\mid S_H$,
$|H|=p^2$, and $H\cap K=Z$. Hence, by
\eqref{equation:centerbound} and \eqref{equation:SKfirst},
$$\begin{aligned}
p^3+3p-3=|S|
&\ge |S_{K\setminus Z}|+|S_{H\setminus Z}|+|S_Z|\\
&=(|S_K|-|S_Z|)+(|S_H|-|S_Z|)+|S_Z|\\
&=|S_K|+|S_H|-|S_Z|\\
&\ge |S_K|+|V|-|S_Z|\\
&\ge 2p^3-(3p^2-4p).
\end{aligned}$$
It follows that
$p^3-3p^2+p+3\leq0,$
which is impossible for $p\ge3$. Therefore $V\mid S_K$, proving
Claim~3. \qed

Since
$|S_{K\setminus Z}| = |S_K|-|S_Z| \ge p^3-(3p^2-4p) \ge 4p$, we choose an  arbitrary subsequence
$$W\mid S_{K\setminus Z}$$ of length $|W|=p-1$.
Let $t$ be maximal such that
\begin{equation}\label{equation:S=WUiS''}
S= W\boldsymbol{\cdot} U_1\boldsymbol{\cdot}\ldots\boldsymbol{\cdot}U_t \boldsymbol{\cdot}S'',
\end{equation}
where $|U_i|=p$ and $\varphi(U_i)$ is zero-sum over $G/Z$ for every
$i\in[1,t]$. By the maximality of $t$ and
Lemma~\ref{lemma:ranktwoEGZ}, we have
$|S''|\le4p-4.$ Hence
$p^3+2p-2-tp=|S''|\leq4p-4,$
and therefore
\begin{equation}\label{equation:tlower}
t\ge
\left\lceil\frac{p^3-2p+2}{p}\right\rceil
=p^2-1.
\end{equation}

We next prove that
\begin{equation}\label{equation:WinK}
W\boldsymbol{\cdot}U_1\boldsymbol{\cdot}\ldots
\boldsymbol{\cdot}U_t\mid S_K.
\end{equation}
Since $W\mid S_{K\setminus Z}$, it suffices to show that $U_1\boldsymbol{\cdot}\ldots
\boldsymbol{\cdot}U_t\mid S_K$. Now suppose that $t\ge p^2$. Take an arbitrary $j\in[1,t]$. Choose a subset
$I\subseteq[1,t]$ such that $|I|=p^2$ and $j\in I$. Put
$V_I=
\mathop{\boldsymbol{\cdot}}\limits_{i\in I}U_i.$
Then $V_I\mid S$, $|V_I|=p^3$, and $\varphi(V_I)$ is zero-sum.
By Claim~3, $V_I\mid S_K$, and hence $U_j\mid S_K$. Since $j$ was
arbitrary, we obtain
$U_1\boldsymbol{\cdot}\ldots\boldsymbol{\cdot}U_t\mid S_K$ in this case. Hence, by \eqref{equation:tlower}, to prove \eqref{equation:WinK}, it remains to consider the case  $$t=p^2-1.$$ By
\eqref{equation:S=WUiS''}, we have
$$
\begin{aligned}
|W\boldsymbol{\cdot}S''|
&=|S|-
\left|U_1\boldsymbol{\cdot}\ldots
\boldsymbol{\cdot}U_t\right|\\
&=(p^3+3p-3)-(p^2-1)p\\
&=4p-3.
\end{aligned}
$$
By Lemma~\ref{lemma:ranktwoEGZ}, there exists a subsequence
$W'\mid W\boldsymbol{\cdot}S''$ such that $|W'|=p$ and
$\varphi(W')$ is zero-sum. Put
$V=
W'\boldsymbol{\cdot}U_1\boldsymbol{\cdot}\ldots
\boldsymbol{\cdot}U_t.$
Then $V\mid S$, $|V|=p^3$, and $\varphi(V)$ is zero-sum. By
Claim~3, $V\mid S_K$.  This proves
\eqref{equation:WinK} in all cases.

We now show that
\begin{equation}\label{equation:t=p2}
t=p^2.
\end{equation}
By \eqref{equation:WinK} and
Lemma~\ref{lemma:upperlength}, we have
$(p-1)+tp
=
\left|
W\boldsymbol{\cdot}U_1\boldsymbol{\cdot}\ldots
\boldsymbol{\cdot}U_t
\right|
\leq |S_K|
\leq p^3+2p-3.$
Thus $t\leq p^2$. Together with \eqref{equation:tlower}, this gives
$$
t\in\{p^2-1,p^2\}.
$$
Suppose, to the contrary, that $t=p^2-1$. It follows from
\eqref{equation:S=WUiS''} that
$|S''|=|S|-|W|-\sum_{i=1}^t|U_i|=3p-2.$
By Lemma~\ref{lemma:ranktwoshort}, there exists a subsequence
$U\mid S''$ such that $|U|\in\{p,2p\}$ and $\varphi(U)$ is
zero-sum over $G/Z$. The maximality of $t$ excludes $|U|=p$.
Therefore $|U|=2p$. Put
$$V=
U_1\boldsymbol{\cdot}\ldots
\boldsymbol{\cdot}U_{t-1}
\boldsymbol{\cdot}U.$$
Then $V\mid S$, $|V|=p^3$, and $\varphi(V)$ is zero-sum over
$G/Z$. By Claim~3, $V\mid S_K$, and hence $U\mid S_K$. Since
$U\mid S''$, the subsequences
$W,U_1,\ldots,U_t,U$ are mutually disjoint. Consequently,
$|S_K|
\ge |W|+\sum_{i=1}^{t}|U_i|+|U|=(p-1)+(p^2-1)p+2p=p^3+2p-1,$
contrary to Lemma~\ref{lemma:upperlength}. This proves \eqref{equation:t=p2}.

Then, by \eqref{equation:WinK} and \eqref{equation:t=p2}, we derive that
$|S_K|
\ge\left|
W\boldsymbol{\cdot}U_1\boldsymbol{\cdot}\ldots\boldsymbol{\cdot}U_t\right|=(p-1)+p^3=p^3+p-1.$
Combining this with Lemma~\ref{lemma:upperlength}, we obtain
\begin{equation}\label{equation:t}
p^3+p-1\leq |S_K|\leq p^3+2p-3.
\end{equation}

Put
$$L=W\boldsymbol{\cdot}S''
\boldsymbol{\cdot}(S''_K)^{[-1]},$$ where $S''_K$ denotes the
subsequence of $S''$ consisting of all terms lying in $K$.
Since
$S=
W\boldsymbol{\cdot}U_1\boldsymbol{\cdot}\ldots
\boldsymbol{\cdot}U_t\boldsymbol{\cdot}S'',$
it follows from \eqref{equation:WinK} that
$S_K=
W\boldsymbol{\cdot}U_1\boldsymbol{\cdot}\ldots
\boldsymbol{\cdot}U_t\boldsymbol{\cdot}S''_K.$
Consequently,
\begin{equation}\label{equation:S=SKtwo}
S=S_K\boldsymbol{\cdot}
S''\boldsymbol{\cdot}(S''_K)^{[-1]}.
\end{equation}
On the other hand, by the definition of $L$, we have
$L\boldsymbol{\cdot}W^{[-1]}
=
S''\boldsymbol{\cdot}(S''_K)^{[-1]}.$
Therefore,
$S=
S_K\boldsymbol{\cdot}
\bigl(L\boldsymbol{\cdot}W^{[-1]}\bigr).$
Taking lengths, we obtain
$$
\begin{aligned}
|L|
&=|S|-|S_K|+|W|\\
&=(p^3+3p-3)-|S_K|+(p-1)\\
&=p^3+4p-4-|S_K|.
\end{aligned}
$$
Combining this equality with \eqref{equation:t}, we obtain
\begin{equation}\label{equation:Lrange}
2p-1\leq |L|\leq3p-3.
\end{equation}
Recall $W\mid S_{K\setminus Z}$. Since
$S''\boldsymbol{\cdot}(S''_K)^{[-1]}$ contains no terms from $K$,
the terms of $L$ lying in $K$ are
precisely the terms of $W$. Hence
\begin{equation}\label{equation:LK=W}
L_K=L_{K\setminus Z}=W.
\end{equation}

\medskip
\noindent\textbf{Claim 4.} The sequence $\varphi(L)$ contains no zero-sum subsequence of length
$p$ or $2p$.

 \smallskip

\noindent {\sl Proof of Claim 4.}
Suppose, to the contrary, that there exists a
subsequence $L'\mid L$ such that
$|L'|\in\{p,2p\}$ and $\varphi(L')$ is zero-sum. Put
$k=p^2-\frac{|L'|}{p}\in\{p^2-1,p^2-2\}$
and
$V'=U_1\boldsymbol{\cdot}\ldots
\boldsymbol{\cdot}U_k\boldsymbol{\cdot}L'.$
By \eqref{equation:S=WUiS''} and the definition of $L$, we have
$V'\mid S$. Moreover,
$|V'|=kp+|L'|=p^3,$
and $\varphi(V')$ is zero-sum. Hence Claim~3 gives
$V'\mid S_K$, and therefore $L'\mid S_K$, i.e., all terms of $L'$ lie in
$K$.  Since $L'\mid L$, it follows from \eqref{equation:LK=W} that $L'\mid W$. This is impossible because
$|L'|\ge p$, whereas $|W|=p-1$. This proves Claim 4. \qed

Apply Lemma~\ref{lemma:relative} to the sequence $\varphi(L)$ and
the subgroup
$\overline K=K/Z$ of $G/Z$. By \eqref{equation:Lrange}, we have
$|\varphi(L)|=|L|<3p$, and Claim~4 shows that $\varphi(L)$ contains
no zero-sum subsequence of length $p$ or $2p$. Hence all the
hypotheses of Lemma~\ref{lemma:relative} are satisfied. Put
$$
\mathcal D=
\left(
\Sigma_p(\varphi(L))
\cup
\Sigma_{2p}(\varphi(L))
\right)
\cap\overline K.
$$
Then
$$
|\mathcal D|\geq |L|-2p+2.
$$
By \eqref{equation:S=SKtwo} and the definition of the sequence $L$, we see that
\begin{equation}\label{equation:SL-1=SKW-1}
S\boldsymbol{\cdot}L^{[-1]}=
S_K\boldsymbol{\cdot}W^{[-1]}.
\end{equation}
Then
\begin{equation}\label{equation:A=varpi=varpi}
A:=\varphi\left(S\boldsymbol{\cdot}L^{[-1]}\right)=\varphi(S_K\boldsymbol{\cdot}W^{[-1]})
\end{equation}
is a sequence over $\overline K\cong C_p$ of length
$|A|=p^3+3p-3-|L|.$
By \eqref{equation:Lrange}, we have $|A|\geq p^3$. Since
$|\overline K|=p$, it follows that
$\mathsf h(A)
\geq
\left\lceil\frac{|A|}{p}\right\rceil
\geq p^2.$

Now we show that
\begin{equation}\label{equation:h(A)geqp3}
\mathsf h(A)\geq p^3.
\end{equation}
Suppose, to the contrary, that $\mathsf h(A)\leq p^3-1$. Put
$r=3p-|L|-2.$
By \eqref{equation:Lrange}, we have
$$
1\leq r\leq p-1
\qquad\text{and}\qquad
|A|=p^3+r-1.
$$
Choose $g\in\overline K$ such that
$\mathsf v_g(A)=\mathsf h(A)$. Then $A$ contains at least $p$
copies of $g$. Moreover,
$|A|-\mathsf v_g(A)
\geq
(p^3+r-1)-(p^3-1)
=r,$
so $A$ contains at least $r$ terms different from $g$. Hence we may
choose a subsequence
$$R\mid S\boldsymbol{\cdot}L^{[-1]}$$ such that
$$\varphi(R)
=
g^{[p]}\boldsymbol{\cdot}R_0,
\qquad
|R_0|=r,
\qquad
g\notin\supp(R_0).$$
Since $r\leq p-1$, it follows that
$|R|=p+r$ and
$\mathsf h(\varphi(R))=p.$
By Lemma~\ref{lemma:height},
$$|\Sigma_p(\varphi(R))|
\geq r+1
=3p-|L|-1.$$
Applying Lemma~\ref{lemma:Cauchy-Davenport} to subsets
$\Sigma_p(\varphi(R))$ and $\mathcal D$ of the group $\overline K\cong C_p$, we obtain
$$\begin{aligned}
\left|
\Sigma_p(\varphi(R))+\mathcal D
\right|
&\geq
\min\left(
p,\,
|\Sigma_p(\varphi(R))|+|\mathcal D|-1
\right)\\
&\geq
\min\left(
p,\,
(3p-|L|-1)+(|L|-2p+2)-1
\right)\\
&=p.
\end{aligned}$$
Therefore
$\Sigma_p(\varphi(R))+\mathcal D=\overline K.$
In particular, $0$ belongs to this sumset. Hence there exist a
$p$-term subsequence $R'\mid R$ and a subsequence $L'\mid L$ of
length $p$ or $2p$ such that
$$\sigma(\varphi(R'))+\sigma(\varphi(L'))=0.$$
Note that $\sigma(\varphi(R'))\ne0$, since otherwise, $\sigma(\varphi(L'))=0$ contradicts Claim 4. By \eqref{equation:SL-1=SKW-1}, we see
$R'\mid
S\boldsymbol{\cdot}L^{[-1]}
=
S_K\boldsymbol{\cdot}W^{[-1]},$
so all terms of $R'$ lie in $K$. Thus
$\sigma(\varphi(R'))\ne0$ implies that $R'$ contains a term from
$K\setminus Z$.
On the other hand,
$L'$ contains a term outside $K$, since
$|L'|\geq p$, whereas
$$
L_K=W
\qquad\text{and}\qquad
|W|=p-1.
$$
Therefore the support of
$\varphi(R'\boldsymbol{\cdot}L')$ contains a nonzero element of
$\overline K$ and an element outside $\overline K$. Put
$B=R'\boldsymbol{\cdot}L'.$ We see that $B$ is a subsequence of $S$ with length
$|B|\in\{2p,3p\}$
such that $\varphi(B)$ is a mixed zero-sum sequence.
Put
$b=\frac{|B|}{p}\in\{2,3\}.$
Choose a maximal family of
mutually disjoint $p$-term subsequences
$V_1,\ldots,V_s$ of $S\boldsymbol{\cdot}B^{[-1]}$ such that $\varphi(V_i)$ is zero-sum for every
$i\in[1,s]$. Let
$S_0=
S\boldsymbol{\cdot}B^{[-1]}
\boldsymbol{\cdot}
\left(
V_1\boldsymbol{\cdot}\ldots\boldsymbol{\cdot}V_s
\right)^{[-1]}.$
By the maximality of $s$ and Lemma~\ref{lemma:ranktwoEGZ}, we have
$|S_0|\leq4p-4$. Therefore,
$$
\begin{aligned}
sp
&=|S|-|B|-|S_0|\\
&\geq
(p^3+3p-3)-bp-(4p-4)\\
&=p^3-(b+1)p+1,
\end{aligned}
$$
which implies
$s\geq p^2-b.$
Consequently,
$B\boldsymbol{\cdot}
V_1\boldsymbol{\cdot}\ldots
\boldsymbol{\cdot}V_{p^2-b}$
is a subsequence of $S$ of length
$bp+(p^2-b)p=p^3.$
Its image under $\varphi$ is zero-sum and remains mixed, since it
contains the mixed sequence $\varphi(B)$. Claim~1 therefore gives a
product-one subsequence of $S$ of length $p^3$, a contradiction. This proves \eqref{equation:h(A)geqp3}.

Recall the arbitrariness of choosing $W\mid S_{K\setminus Z}$ subject to
$|W|=p-1$.  Combined with \eqref{equation:A=varpi=varpi} and \eqref{equation:h(A)geqp3}, we derive the following claim immediately.

\medskip

\noindent\textbf{Claim 5.} $\mathsf h\left(\varphi\left(S_K\boldsymbol{\cdot}W^{[-1]}\right)\right)\geq p^3$ holds for every $W\mid S_{K\setminus Z}$ such that $|W|=p-1$.

\medskip

Put
$C=\varphi(S_K).$
Then we have
$\mathsf h(C)\geq \mathsf h\left(
\varphi\left(
S_K\boldsymbol{\cdot}W^{[-1]}\right)\right)\geq  p^3$.
By \eqref{equation:SKfirst}, we have
$$|C|=|S_K|
\leq p^3+2p-3
<2p^3.$$
It follows that there exists a unique element
$g\in\overline K$ such that
$\mathsf v_g(C)\geq p^3.$ Moreover, by \eqref{equation:centerbound} we have
$\mathsf v_0(C)=|S_Z|
\leq3p^2-4p
<p^3$, which implies that $g\neq 0$.

We show that
\begin{equation}\label{equation:bigcoset}
\mathsf v_g(C)\geq p^3+p-1.
\end{equation}
Suppose, to the contrary, that
$p^3\leq \mathsf v_g(C)\leq p^3+p-2.$ We choose $\widetilde{W}\mid S_K$ such that $\varphi(\widetilde{W})=g^{[p-1]}$.
Since $g\neq 0$, we have $\widetilde{W}\mid S_{K\setminus Z}$.  Then $\mathsf v_g\left(
\varphi\left(
S_K\boldsymbol{\cdot}\widetilde{W}^{[-1]}
\right)
\right)=\mathsf v_g(C)-(p-1)\leq p^3+p-2-(p-1)=
p^3-1$. Moreover, every other element of $\overline K$ already has
multiplicity at most $p^3-1$.
Therefore,
$\mathsf h\left(
\varphi\left(
S_K\boldsymbol{\cdot}\widetilde{W}^{[-1]}
\right)
\right)
\leq p^3-1.$
By applying Claim 5 to $\widetilde{W}$, we derive a contradiction. This proves \eqref{equation:bigcoset}.

By \eqref{equation:bigcoset}, we can choose a subsequence
$y_1\boldsymbol{\cdot}\ldots
\boldsymbol{\cdot}y_{p^3+p-1}
\mid S_K$
such that $\varphi(y_i)=g$ for each
$i\in[1,p^3+p-1]$. Fix an element $x\in K$ such that
$\varphi(x)=g$. Since
$\ker(\varphi)=Z$, every $y_i$ can be written uniquely as
$y_i=x\ast z_i$ with $z_i\in Z.$
Thus
$z_1\boldsymbol{\cdot}\ldots
\boldsymbol{\cdot}z_{p^3+p-1}$
is a sequence over $Z\cong C_p$ of length $p^3+p-1$.
By Lemma~\ref{lemma:prescribed}, it contains a zero-sum subsequence
$z_{i_1}\boldsymbol{\cdot}\ldots
\boldsymbol{\cdot}z_{i_{p^3}}$
of length $p^3$. Under the identification of the additive group
$C_p$ with the multiplicative group $Z$, this means that
$z_{i_1}\ast\cdots \ast z_{i_{p^3}}=1.$
Since $K$ is abelian and has exponent $p$, we have
$y_{i_1}\ast \cdots \ast y_{i_{p^3}}
=(x\ast z_{i_1})\ast \cdots\ast (x\ast z_{i_{p^3}})
=x^{p^3}\ast z_{i_1}\ast \cdots \ast z_{i_{p^3}}
=1,$
contrary to $1\notin\Pi_{p^3}(S)$. This completes the proof of the theorem.
\end{proof}

We conclude with the following corollary of Theorem~\ref{theorem:main}.

\begin{corollary}\label{corollary:orderp3}
Let $p$ be a prime and let $G$ be a group of order $p^3$. Then
$$
E(G)=\mathsf d(G)+|G|.
$$
\end{corollary}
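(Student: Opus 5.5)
The plan is a classification of groups of order $p^3$, followed by quoting the matching result for each class.

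First suppose $G$ is abelian. Then Gao's theorem \cite{Gao1996}, recalled in the introduction, gives $E(G)=\mathsf d(G)+|G|$ directly. So assume from now on that $G$ is nonabelian of order $p^3$.

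For $p=2$, the only nonabelian groups of order $8$ are the dihedral group $D_8$ and the quaternion group $Q_8$, which is dicyclic. Bass \cite{Bass2007} proved the Gao-Zhuang equality for all dihedral and dicyclic groups, so this case is covered. Alternatively, both groups have a cyclic subgroup of order $4$, whose index $2$ is the smallest prime divisor of $8$, so \cite{QuGaoLi} (or \cite[Corollary~4.5]{QuWangLi}) also applies.

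For odd $p$, I invoke the standard classification (see \cite{Gorenstein}). Up to isomorphism there are exactly two nonabelian groups of order $p^3$:
\begin{itemize}
\item the group of exponent $p^2$, namely $C_{p^2}\rtimes C_p$;
\item the group of exponent $p$, which is $H_{p^3}$.
\end{itemize}
The exponent-$p^2$ group contains an element of order $p^2$. That element generates a cyclic subgroup of index $p$, and $p$ is the smallest (indeed the only) prime divisor of $p^3$. Hence the cyclic-index result \cite[Corollary~4.5]{QuWangLi}, or equivalently \cite{QuGaoLi}, gives $E(G)=\mathsf d(G)+|G|$ for this group.

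If instead $G$ has exponent $p$, then $G\cong \operatorname{UT}_3(\mathbb F_p)=H_{p^3}$, as noted in the introduction. In that case Theorem~\ref{theorem:main} applies.

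The key fact to justify is this dichotomy for odd $p$: a nonabelian group of order $p^3$ either has exponent $p$ and is then isomorphic to $H_{p^3}$, or it has an element of order $p^2$. The exponent cannot be $p^3$, since the group would then be cyclic. So the exponent is $p$ or $p^2$. The only thing to cite is uniqueness of the nonabelian group of order $p^3$ and exponent $p$, and this is standard. I expect no real obstacle here; the substantive work is entirely in Theorem~\ref{theorem:main}.
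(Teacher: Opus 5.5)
Your proof is correct and matches the paper's: Gao's theorem for abelian $G$, the cyclic-index-$p$ result \cite[Corollary~4.5]{QuWangLi} when $\exp(G)=p^2$, and uniqueness of the nonabelian group of exponent $p$ together with Theorem~\ref{theorem:main} when $\exp(G)=p$. The only difference is that you treat $p=2$ separately via $D_8$ and $Q_8$, whereas the paper absorbs it into the exponent dichotomy by noting that a group of exponent $2$ is abelian.
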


\begin{proof}
If $G$ is abelian, the conclusion follows from Gao's theorem
\cite{Gao1996}. Suppose that $G$ is nonabelian. Since
$\exp(G)\mid p^3$ and $\exp(G)=p^3$ would imply that $G$ is cyclic,
we have
$$
\exp(G)\in\{p,p^2\}.
$$

If $\exp(G)=p^2$, then $G$ contains an element $x$ of order $p^2$.
Thus $\langle x\rangle$ is a cyclic subgroup of index $p$ in $G$.
Since $p$ is the smallest prime divisor of $|G|$, the conclusion
follows from \cite[Corollary~4.5]{QuWangLi}.

It remains to consider the case $\exp(G)=p$. In this case $p$ is
odd, since every group of exponent $2$ is abelian. By
\cite[Chapter~5, Theorem~5.1]{Gorenstein}, up to isomorphism, the
unique nonabelian group of order $p^3$ and exponent $p$ is
$H_{p^3}=\operatorname{UT}_3(\mathbb F_p)$. Hence the conclusion follows from
Theorem~\ref{theorem:main}.
\end{proof}

\bigskip

\noindent {\bf Acknowledgments}

\noindent
This work was supported by the National Natural Science Foundation of
China (NSFC) under Grant No. 12371335, and supported by the Henan
Provincial Selective Research Funding Program for Returned Scholars
Studying Abroad (No. HNLX202611).

\section*{Author information}

\noindent\textbf{Yongke Qu}\\
Department of Mathematics, Luoyang Normal University,
Luoyang 471934, P.R. China\\
E-mail: \href{mailto:yongke1239@163.com}{yongke1239@163.com}

\medskip

\noindent\textbf{Guoqing Wang}\\
School of Mathematical Sciences, Tiangong University,
Tianjin 300387, P.R. China\\
E-mail: \href{mailto:gqwang1979@aliyun.com}{gqwang1979@aliyun.com}

\end{document}